\newtheorem{stmt}{Theorem}
\newtheorem{cor}{ Corollary }
\newtheorem{lm}{Lemma}
\newtheorem{remark}{Remark}
\newcommand\be {\begin{equation}}
\newcommand\ee {\end{equation}}
\newcommand\bV {{\bf V}}
\newcommand\bx {{\bf x}}
\newcommand{\Real}{\mathbf{R}}
\def\DOT{\!\cdot\!}
\def\u{\mbox{\boldmath $u$}}
\def\f{\mbox{\boldmath $f$}}
\def\x{\mbox{\boldmath $x$}}
\def\Gmu {{G(\mu)}}
\def\Gnu {{G(\nu)}}
\def\Gnuk {{G(\nu k)}}
\def\dL2{{\dot{L}^2}}
\def\dH1{{\dot{H}^1_{per}}}
\def\om{\omega}
\def\omnm1{\|\omega^{n-1}\|}
\def\omnm1v{\|\nabla\omega^{n-1}\|}
\def\omnp1{\|\omega^{n+1}\|}
\def\omnp1v{\|nabla\omega^{n+1}\|}
\def\bSk{{\mathbb{S}_k}}
\def\mAk{{\mathcal{A}_k}}
\title{An efficient second order in time scheme for approximating long time statistical properties of the two dimensional Navier-Stokes equations}
\author{ Xiaoming Wang \thanks{Department of Mathematics, Florida State University, Tallahassee, FL 32306}({\tt wxm@math.fsu.edu})}
\date{\today}
\begin{document}

\maketitle

\begin{abstract}
We investigate the long tim behavior of the following efficient second order in time scheme  for the 2D Navier-Stokes equation in a periodic box:
$$    \frac{3\omega^{n+1}-4\omega^n+\omega^{n-1}}{2k} + \nabla^\perp(2\psi^n-\psi^{n-1})\cdot\nabla(2\omega^n-\omega^{n-1}) - \nu\Delta\omega^{n+1} = f^{n+1},
 \quad -\Delta \psi^n = \om^n.
$$
The scheme is a combination of a 2nd order in time backward-differentiation (BDF) and a special explicit Adams-Bashforth treatment of the advection term. Therefore only a linear constant coefficient Poisson type problem needs to be solved at each time step.
We prove uniform in time bounds on this scheme in $\dL2$,  $\dH1$ and $\dot{H}^2_{per}$ provided that the time-step is sufficiently small.
These time uniform estimates further lead to the convergence of long time statistics (stationary statistical properties)  of the scheme to that of the NSE itself  at vanishing time-step. Fully discrete schemes with either Galerkin Fourier or collocation Fourier spectral method are also discussed. 
\end{abstract}

\begin{keywords}
Navier-Stokes equations, invariant measures, long time statistical properties, second order in time scheme, long time global stability
\end{keywords}

\begin{AMS}

\end{AMS}

%  sect. 1
\section{Introduction}

It is well-known that incompressible fluid flows could be extremely complex exhibiting seemingly random chaotic and/or turbulent behavior.
%According to the legendary physicist Richard Feynman, turbulence is the last great unsolved problem of classical physics.
Statistical approaches are necessary in order to describe such kind of complex behavior (see for instance the treatises by Monin and Yaglom \cite{MY1975},   Frisch \cite{F1995},  Foias, Manley, Rosa and Temam \cite{FMRT2001}, Lasota and Mackey \cite{LM1994},  Majda and Wang \cite{MW2006} among others). 
If the the long time statistics of the system, i.e., the climate,  is the subject of study, we then need to investigate the  invariant measures of the system since it is the invariant measure (or stationary statistical solutions) that describes the long time statistics of the underlying dynamical system. 
Since most of these chaotic and/or turbulent systems are not amenable to analytical techniques at the present and in the near future, the issue of the development of numerical methods  that are able to capture the long time statistics becomes very important.
Higher order efficient schemes are apparently preferred due to the long time integration needed. 

In this paper, we will focus on the development and analysis of an efficient second order two step numerical method that is able to capture the long time statistics for the 
 following two dimensional  Navier-Stokes system for homogeneous incompressible Newtonian fluids in the  vorticity-streamfunction formulation (see for instance \cite{P2002})
 %--- NSE
 \begin{eqnarray}
   \frac{\partial\omega}{\partial t}+\nabla^\perp\psi\cdot\nabla\omega -\nu\Delta \omega &=& f, \label{NSE}
   \\
                                -\Delta\psi &=& \omega,
\end{eqnarray}
    where  $\omega$ denotes the vorticity, $\psi$ is the streamfunction, and $f$ represents (given) external body forcing, $\nu$ denotes the kinematic viscosity.  For simplicity we will assume periodic boundary condition, i.e., the domain is a two dimensional torus $\mathbb{T}^2=(0,2\pi)\times(0,2\pi)$, and that all functions are mean zero over the torus.
       
   For analytical data, it is known that the solution is analytic in space (in fact Gevrey class regular due to Foias and Temam \cite{FT1989}), and hence Fourier spectral is the obvious choice for spatial discretisation. As for time discretization, efficiency requires explicit treatment of the nonlinear term while stability calls for the implicit treatment of the diffusion term. Therefore we propose the following two step second order semi-implicit algorithm which treat the viscous term implicitly and the nonlinear advection term  explicitly
  %--- scheme
  \begin{eqnarray}
    \frac{3\omega^{n+1}-4\omega^n+\omega^{n-1}}{2k} + \nabla^\perp(2\psi^n-\psi^{n-1})\cdot\nabla(2\omega^n-\omega^{n-1}) - \nu\Delta\omega^{n+1} &=& f^{n+1},
 \label{scheme} \\
-\Delta\psi^j &=&\omega^j, j=n-1, n.
    %\frac{\omega^{n+1}_h - \omega^n_h}{\dt} + u^n_h\cdot\nabla\omega^n_h - \nu\Delta\omega^{n+1}_h = f_h
    %\frac{\omega^{n+1}_h - \omega^n_h}{\dt} + u^n_h\cdot\nabla\omega^n_h - \nu\Delta\omega^{n+1}_h - \alpha \Delta(\omega^{n+1}_h-\omega^n_h)= f_h
\end{eqnarray}
   Here $k$ is the time step, and $\omega^{n-1}, \omega^n, \omega^{n+1}$ are the approximation of the vorticity at  discrete time $(n-1)k, nk, (n+1)k$ respectively.  The convergence of this scheme on any fixed time interval can be derived via standard methods (see for instance \cite{MT1998}). There are many over-the-shelf efficient solvers of the (\ref{scheme}) since it essentially reduces to a Poisson solver  at each time step.
     This scheme falls into the category of the so-called implicit-explicit schemes (IMEX) \cite{ARW1995, Crouzeix1980} which combines  second order backward-differentiation (BDF) and a special second order Adams-Bashforth treatment of the nonlinear term. However, we would like to point out that our scheme is different from those classical ones where the Adams-Bashforth treatment of the nonlinear term is in the form of linear multistep fashion of $2\nabla^\perp\psi^n\cdot\nabla\omega^n-\nabla^\perp\psi^{n-1}\cdot\omega^{n-1}$ (see for instance Karniadakis, Israeli and Orszag \cite{ KIO1991}, or Ascher, Ruuth and Wetton \cite{ARW1995}, or Varah \cite{Varah1980}). 
     The classical one is also known as extrapolated Gear's  scheme.
     The new alternative treatment of the nonlinear advection term proves to be crucial in our long time stability analaysis.

   There is a long  list of work on time and spatial discretization of the NSE and related dissipative systems that preserves the dissipativity in various forms (see for instance \cite{FJKT1991, FJKT1994, Geveci1989, HillSuli1995, HillSuli2000, JKT1991, Ju2002, Larsson1989, MT1998, Shen1989, Shen1990, ToneW2006, Yan} among many others). In particular, a general framework for the convergence of global attractor (in upper semicontinous fashion) for one step scheme was derived by Hale, Lin and Raugel \cite{HLR1988}. The issue of the design of numerical schemes that can capture long time statistical behavior was clearly identified by Sigurgeirsson and Stuart \cite{SS2001}. Related issue in the case of Hamiltonian system was discussed in Tupper \cite{Tupper2005}.  It is  discovered recently by the author that if the dissipativity of dissipative system is preserved appropriately, then one step numerical scheme would be able to capture the long time statistical property of the underlying dissipative system asymptotically in the sense that invariant measures of the scheme would converge to those of the continuous in time system \cite{Wang2010}. This general framework for the convergence of long time statistics has been applied to the infinite Prandtl number model for convection by Cheng and Wang \cite{ChengWang2008}, the 2D Rayleigh-Benard convection by Tone and Wang \cite{ToneWang2010}, the 2D incompressible NSE by Gottlieb, Tone, Wang, Wang and Wirosoetisno \cite{GTWWW2011}. The idea of preserving certain properties of PDE in numerical discretisation is a well-known theme (see for instance \cite{LR2004} and the references therein for preserving the sympletic structure for Hamiltonian systems, \cite{TamWebb1993} for preserving dispersive relation in dispersive equations, as well as works cited above on preserving dissipativity for dissipative systems).
   
   The main purpose of this manuscript is to show,  as long as the time-step is sufficiently small, that the second order two step (three level) scheme (\ref{scheme}) is long time stable   in the sense that we are able to derive uniform in time estimates in various Sobolev spaces for the solutions to the scheme. We emphasize that the time-step restriction we have is independent of the spatial discetization although it depends on the data. Hence this is different from the usual CFL condition. We also show that the marginal distributions of the  invariant measures of the scheme converge to those of the NSE \eqref{NSE} at vanishing time-step. This may be viewed as an improvement of our earlier result on the convergence of long time statistical properties for a first order classical efficient scheme for the 2D Navier-Stokes equations \cite{GTWWW2011}.
   
  Multi-level numerical schemes are not discrete dynamical systems on the natural phase space. However, they can be naturally viewed as dynamical system on product space (see for instance the book by Stuart and Humphries \cite{SH1996}, or Hill and Suli's work on approximating global attractor of sectorial evolution equation via linear multistep methods \cite{HillSuli1995} among others).  However, the limit of this dynamical system on product space is not the product of the original (NSE) system, and certain projections must be used in order to put it into a framework that is similar to the semigroup set-up (Hill and Suli \cite{HillSuli1995} called it monoid).
Although the general framework for convergence of stationary statistical properties presented in \cite{Wang2010} may be modified to show the convergence of long time statistical properties using the monoid approach, here we use a more directly approach based on Liouville type equations as was done in the case of a first order scheme for the infinite Prandtl number model for convection, see Cheng and Wang \cite{ChengWang2008}. The limit invariant measure should be concentrated on the diagonal heuristically since the numerical method converges. We have to consider marginal distributions of the invariant measures of the scheme as a dynamical system on the product space so that it is compartible to the phase space of the NSE. We are able to show that all marginal distribution/measure of invariant measure of the scheme converge to an invariant measure/stationary stationary statistical property of the NSE \eqref{NSE} (see Wang \cite{Wang2008} for an application in terms of taking appropriate marginal distribution and convergence of stationary statistical properties in the context of infinite Prandtl number limit within the Boussinesq model for convection). Therefore long time statistics of the scheme (in the sense of generalized time average for instance) converges to those of the 2D NSE as the time-step shrinks to zero.
  
  The rest of the paper is organized as follows. We demonstrate the long time stability (boundedness) of the solution to the scheme in  $\dL2$ , $\dH1$ and $\dot{H}^2_{per}$ in section 2.  In section 3, we prove the convergence of the long time statistical properties.
  We briefly touch upon the issue of spatial discretization in section 4.  Final remarks and conclusions are offered at the end. An appendix covers two technical lemmas that are used in this manuscript. 

  %--- sect. 2
  \section{Time uniform bounds for the semi-discrete scheme}

  We first recall the well-known periodic Sobolev spaces on $\Omega=(0,2\pi)\times (0,2\pi)$ with average zero:
  \be
  \dot{H}^m_{per}(\Omega) := \left\{ \phi \in H^m(\Omega) \bigg| \int_\Omega \phi =0, \phi \ \text{periodic with period}\  2\pi \ \text{in each direction}\right\}
  \ee
  $\dot{H}^{-m}_{per}$ is defined as the dual space of $ \dot{H}^m_{per}$ with the duality induced by the $L^2$ inner product. We will use $\|\cdot\|:=\sqrt{\int_\Omega |\cdot|^2\,d\bx}$ to denote the $L^2(\Omega)$ norm.

    The adoption of $ \dot{H}^m_{per}$ is well-known (see for instance Constantin and Foias \cite{CF1988}, or Temam \cite{T1983}) since this space is invariant under the Navier-Stokes dynamics (\ref{NSE}) provided that the initial data and the forcing term belongs to the same space.

  %--- 2.1
  \subsection{Well-posedness of the scheme}
  We first demonstrate that the scheme (\ref{scheme}) is well-posed in the space $L^2$.
\begin{lm}\label{wellpose}
For $f^{n+1}\in \dot{L}^2$, the scheme (\ref{scheme}) is well-posed on $L^2$ in the sense that there exists a unique solution in $L^2$ with $L^2$ data. Moreover, the following estimates hold:
\begin{eqnarray} 
2 \|\om^{n+1}\|^2+\nu k\|\nabla\om^{n+1}\|^2 
  &\le& \left(\frac{4\|\om^n\|+\|\om^{n-1}\|}{2}+ k\|f^{n+1}\|_{H^{-1}}\right)^2 + \frac{k C_w^2}{\nu}   (2\|\om^n\|+\|\om^{n-1}\|)^4, \forall n\ge 1,
 \label{L2H1}
\\
2 \|\nabla\om^{n+1}\|^2+\nu k\|\Delta\om^{n+1}\|^2 
  &\le& \left(\frac{4\|\nabla\om^n\|+\|\nabla\om^{n-1}\|}{2}+ k\|f^{n+1}\|\right)^2 
 \nonumber \\
&& + \frac{k C_w^2}{\nu}   (2\|\om^n\|+\|\om^{n-1}\|)^2(2\|\nabla\om^n\|+\|\nabla\om^{n-1}\|)^2, \forall n\ge 3.
 \label{H1H2}
\end{eqnarray}
In particular, $\om^n, \om^{n-1} \in \dL2$ implies $\om^{n+1}\in \dH1, \om^{n+2}\in \dot{H}^2_{per}$.
\end{lm}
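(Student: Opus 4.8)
The plan is to read (\ref{scheme}), with $\omega^{n-1},\omega^{n}$ (and the associated $\psi^{n-1},\psi^{n}$) regarded as data, as a linear constant-coefficient elliptic equation for the single unknown $\omega^{n+1}$,
$$(3I-2k\nu\Delta)\,\omega^{n+1}=4\omega^{n}-\omega^{n-1}+2kf^{n+1}-2k\,\nabla^\perp(2\psi^{n}-\psi^{n-1})\!\cdot\!\nabla(2\omega^{n}-\omega^{n-1}),$$
to solve it, and then to extract (\ref{L2H1})--(\ref{H1H2}) by testing against $2k\,\omega^{n+1}$ and against $-2k\,\Delta\omega^{n+1}$.

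For well-posedness, note that on the mean-zero periodic torus $3I-2k\nu\Delta$ is diagonalized by the Fourier basis with symbol $3+2k\nu|{\bf m}|^{2}\ge 3$, hence it is a self-adjoint coercive isomorphism $\dot{H}^{s+1}_{per}\to\dot{H}^{s-1}_{per}$ for every $s$, so it suffices to place the right-hand side in some $\dot{H}^{s-1}_{per}$, $s\ge 0$. The first three terms are in $\dL2$; for the advection term, $\omega^{j}\in\dL2$ gives $\psi^{j}\in\dot{H}^{2}_{per}$, so $\nabla^\perp(2\psi^{n}-\psi^{n-1})\in\dH1\hookrightarrow L^{p}$ for every $p<\infty$ in two dimensions, and writing it in divergence form $\mathrm{div}\big((2\omega^{n}-\omega^{n-1})\nabla^\perp(2\psi^{n}-\psi^{n-1})\big)$ puts it in $\dot{H}^{-1-\varepsilon}_{per}$ for every $\varepsilon>0$, which is enough. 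Lax--Milgram (equivalently, dividing Fourier coefficients) then yields a unique solution, and uniqueness in $\dL2$ is immediate since the difference of two solutions is annihilated by the positive operator $3I-2k\nu\Delta$; the sharper conclusion $\omega^{n+1}\in\dH1$ (and $\omega^{n+2}\in\dot{H}^{2}_{per}$) is a by-product of the a~priori estimates below.

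For (\ref{L2H1}) I pair the scheme, in $L^{2}$, with $2k\,\omega^{n+1}$. The BDF2 term gives $(3\omega^{n+1}-4\omega^{n}+\omega^{n-1},\omega^{n+1})\ge 3\|\omega^{n+1}\|^{2}-(4\|\omega^{n}\|+\|\omega^{n-1}\|)\|\omega^{n+1}\|$; bounding $4\omega^{n}-\omega^{n-1}$ and the forcing contribution $2k(f^{n+1},\omega^{n+1})$ and applying Young's inequality leads to $\big(\tfrac{4\|\omega^{n}\|+\|\omega^{n-1}\|}{2}+k\|f^{n+1}\|_{H^{-1}}\big)^{2}$ on the right, while the viscous term gives $2\nu k\|\nabla\omega^{n+1}\|^{2}$. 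The advection term $2k\big(\nabla^\perp(2\psi^{n}-\psi^{n-1})\!\cdot\!\nabla(2\omega^{n}-\omega^{n-1}),\omega^{n+1}\big)$ is the delicate one: using that $\nabla^\perp(2\psi^{n}-\psi^{n-1})$ is divergence free I transfer the gradient onto $\omega^{n+1}$, estimate the resulting trilinear quantity by Ladyzhenskaya's inequality combined with the elliptic identities $\|\nabla(2\psi^{n}-\psi^{n-1})\|=\|2\omega^{n}-\omega^{n-1}\|_{H^{-1}}$ and $\|\nabla^{2}(2\psi^{n}-\psi^{n-1})\|=\|2\omega^{n}-\omega^{n-1}\|$, and close with Young's inequality, which uses up one copy of $\nu k\|\nabla\omega^{n+1}\|^{2}$ (the other staying on the left) and leaves $\tfrac{kC_{w}^{2}}{\nu}(2\|\omega^{n}\|+\|\omega^{n-1}\|)^{4}$. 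The second bound (\ref{H1H2}) is obtained the same way, testing instead against $-2k\,\Delta\omega^{n+1}$: the viscous term now gives $2\nu k\|\Delta\omega^{n+1}\|^{2}$, the BDF2 and forcing terms are handled one derivative higher (whence $\|f^{n+1}\|$ in place of $\|f^{n+1}\|_{H^{-1}}$), and for the advection term one integrates by parts and uses, in two dimensions, $\nabla^\perp(2\psi^{n}-\psi^{n-1})\in\dot{H}^{2}_{per}\hookrightarrow L^{\infty}$ with Agmon's inequality to bound its $L^{\infty}$ norm by $\|2\omega^{n}-\omega^{n-1}\|$ and $\|\nabla(2\omega^{n}-\omega^{n-1})\|$; Young's inequality absorbs $\nu k\|\Delta\omega^{n+1}\|^{2}$ and produces $\tfrac{kC_{w}^{2}}{\nu}(2\|\omega^{n}\|+\|\omega^{n-1}\|)^{2}(2\|\nabla\omega^{n}\|+\|\nabla\omega^{n-1}\|)^{2}$. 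This uses $\omega^{n},\omega^{n-1}\in\dH1$, guaranteed by (\ref{L2H1}) once $n\ge 3$.

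To make the manipulations of the last paragraph legitimate at the stated low regularity I would first carry them out for the Galerkin--Fourier truncation of the scheme --- a finite-dimensional, uniquely solvable system on which every integration by parts and every product inequality is valid --- with constants independent of the truncation level, and then pass to the limit, the uniform bounds yielding both the asserted regularity of the limit and, by weak lower semicontinuity of the norms on the left, the inequalities themselves. The only real obstacle is the advection term: in contrast with the continuous vorticity equation, where testing against $\omega$ annihilates $\nabla^\perp\psi\cdot\nabla\omega$, here the explicitly extrapolated advection is paired with $\omega^{n+1}$ (or $\Delta\omega^{n+1}$) and does not cancel, so one must exploit the divergence-free structure to shift the derivative, choose the interpolation inequality that keeps only $\|\nabla\omega^{n+1}\|$ (resp. $\|\Delta\omega^{n+1}\|$) at top order, and pay a factor $k/\nu$ in Young's inequality to absorb it --- the same mechanism that will later force the small-time-step restriction and that produces the $\dL2\to\dH1\to\dot{H}^{2}_{per}$ hierarchy stated at the end of the lemma.
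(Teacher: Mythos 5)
Your overall architecture coincides with the paper's: view \eqref{scheme} as the constant-coefficient elliptic problem $\frac{3}{2k}\omega^{n+1}-\nu\Delta\omega^{n+1}=(\text{data})$, solve it, and obtain \eqref{L2H1}--\eqref{H1H2} by pairing with $2k\,\omega^{n+1}$ and $-2k\,\Delta\omega^{n+1}$. The gap is in how you estimate the trilinear term. For \eqref{L2H1} the data are only $\omega^n,\omega^{n-1}\in\dL2$, so after moving the derivative onto $\omega^{n+1}$ you must bound $\int_\Omega\nabla^\perp(2\psi^n-\psi^{n-1})\cdot\nabla\omega^{n+1}\,(2\omega^n-\omega^{n-1})$ keeping only $\|\nabla\omega^{n+1}\|_{L^2}$ at top order and only $L^2$ norms of the old vorticities elsewhere. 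Ladyzhenskaya cannot do this: the split $\|\nabla\psi\|_{L^4}\|\nabla\omega^{n+1}\|_{L^2}\|2\omega^n-\omega^{n-1}\|_{L^4}$ needs an $L^4$ (i.e.\ $H^{1/2}$) bound on $2\omega^n-\omega^{n-1}$ that you do not have, the split $\|\nabla\psi\|_{L^4}\|\nabla\omega^{n+1}\|_{L^4}\|2\omega^n-\omega^{n-1}\|_{L^2}$ brings in $\|\Delta\omega^{n+1}\|$, which is not on the left-hand side of \eqref{L2H1} and cannot be absorbed, and the endpoint $\|\nabla\psi\|_{L^\infty}$ is unavailable since $H^1\not\hookrightarrow L^\infty$ in 2D. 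What closes the estimate is the Jacobian (div--curl) structure of $\nabla^\perp\psi\cdot\nabla\phi$: the Wente-type bounds of Lemma \ref{wente}, specifically $\|\nabla^\perp\psi\cdot\nabla\phi\|_{H^{-1}}\le C_w\|\psi\|_{H^2}\|\phi\|_{L^2}$, which are genuine compensated-compactness results and not consequences of H\"older plus Sobolev. That is where the constant $C_w$ in \eqref{L2H1} comes from, and it is also what places the right-hand side of the elliptic problem exactly in $\dot H^{-1}_{per}$ (your divergence-form argument only reaches $\dot H^{-1-\varepsilon}_{per}$, which gives solvability but not the stated bound, nor $\omega^{n+1}\in\dH1$ directly).

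A smaller but real discrepancy occurs in \eqref{H1H2}: writing $W=2\omega^n-\omega^{n-1}$, Agmon gives $\|\nabla(2\psi^n-\psi^{n-1})\|_{L^\infty}^2\lesssim \|W\|\,\|\nabla W\|$, so your route produces a remainder of size $\frac{k}{\nu}\|W\|\,\|\nabla W\|^{3}$ rather than the stated $\frac{kC_w^2}{\nu}\|W\|^{2}\|\nabla W\|^{2}$; since $\|W\|\le\|\nabla W\|$ this is strictly weaker and does not prove the inequality as written. The paper instead invokes the Wente estimate \eqref{158}, $\|\nabla^\perp\psi\cdot\nabla\phi\|_{L^2}\le C_w\|\psi\|_{H^2}\|\phi\|_{H^1}$, again a borderline bilinear estimate that H\"older/Sobolev alone does not give. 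Your Galerkin regularization to justify the formal manipulations is fine and consistent with the paper; once the two Wente estimates are substituted for Ladyzhenskaya and Agmon, the rest of your argument goes through.
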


\begin{proof}
  It is easy to see that for $\omega^n,  \omega^{n-1} \in \dot{L}^2$, we have $\psi^n,  \psi^{n-1}\in \dot{H}^2_{per}$. Hence 
\be \|\nabla^\perp(2\psi^n-\psi^{n-1})\cdot\nabla(2\omega^n-\omega^{n-1})\|_{\dot{H}^{-1}_{per}}
 \le C_w (2\|\om^n\|+\|\om^{n-1}\|)^2
\ee
 by the Wente type estimate (see proposition \ref{wente}).    Therefore, the scheme (\ref{scheme})  which can be viewed as a Poisson type problem
  \be
  \frac{3\omega^{n+1}}{2k} - \nu\Delta\omega^{n+1} =
     f ^{n+1}- \nabla^\perp(2\psi^n-\psi^{n-1})\cdot\nabla(2\omega^n-\omega^{n-1}) +\frac{4\omega^n-\omega^{n-1}}{2k}
     \in \dot{H}^{-1}_{per}
  \ee
   possesses a unique solution in  $\dot{L}^2$ (in fact in $\dot{H}^1_{per}$)  and the solution depends continuously on the data. Therefore it is well-posed on $\dot{L}^2$. The bounds on the solution follow from taking the inner product of the equation above with $\om^{n+1}$ and $-\Delta\om^{n+1}$ respectively, utilizing Cauchy-Schwarz as well as Wente type estimates (see Appendix).
   
\end{proof}

%--- 2.2
\subsection{Time uniform bound in $\dot{L}^2$}
  Now we derive the long time stability of the scheme (\ref{scheme})  in $\dot{L}^2$.

There are two approaches to derive the uniform in time bounds. The first one utlizes a generalized G-stability property of the 2nd order BDF due to Hill and Suli \cite{HillSuli2000}. The second one utlizies the original G-stability of the 2nd order BDF scheme \cite{HW2002} together with a novel two step discrete Gonwall type inequality (see lemma \ref{gronwall}). 

We first introduce the first approach based on a generalized G-stability of 2nd order BDF.
  
 %--2.2.1
 \subsubsection{Generalized G-norm and G-stability identity}
  We will utilize the G-stability of the 2nd order backward differentiation in an essential way. More specifically, we introduce the following positive definite matrix for $\mu\ge 0$
  \begin{equation}
  G(\mu) = \left(\begin{array}{cc}
  		\frac12 & -1
		\\ 
		-1 & \frac52 + \frac{\mu}{2}
		 \end{array}
		 \right)
\label{G_matrix}
\end{equation}
and we introduce a family of  generalized G-norms on $\mathbb{R}^2$ as
\be
  |\bV |^2_\Gmu = \bV^T \Gmu \bV, \bV\in \mathbb{R}^2,
\ee
 and the associated norms on $\mathbb{R}^2$ valued functions as
 \be
   \|\bV \|^2_\Gmu = \int_{\Omega} \bV^T \Gmu \bV \,d\bx,   \bV(\bx) : \Omega\rightarrow \mathbb{R}^2.
\ee 
It is easy to see that $|\bV |_\Gmu$ is monotonically increasing in $\mu$, and is an equivalent norm on $\mathbb{R}^2$ for $\mu \in [0, 1]$ in the sense that there exist $0< C_l <1 < C_u 0$, such that for all $\mu\in [0,1]$,
\begin{eqnarray}
   C_l |\bV|^2_{G(0)}:=C_l |\bV|^2_{G} \le C_l |\bV |^2_\Gmu &\le& |\bV|^2 \le C_u |\bV |^2_{G(0)}:=C_u |\bV|^2_{G} \le C_u |\bV |^2_{G(\mu)}, \forall \bV \in \mathbb{R}^2 ,
  \label{G-equiv}
  \\
   C_l \|\bV\|^2_{G(0)}:=C_l \|\bV\|^2_{G} \le C_l \|\bV \|^2_\Gmu &\le& \|\bV\|^2_{L^2} \le C_u \|\bV \|^2_{G(0)}:=C_u \|\bV\|^2_{G} \le C_u \|\bV \|^2_{G(\mu)}, \forall \bV : \Omega\rightarrow \mathbb{R}^2 .
  \end{eqnarray}
Moreover, we have the following identity due to Hill and Suli \cite{HillSuli2000} (contained in the proof of their Lemma 6.1) which is a generalization of the G-stability of the BDF method (see for instance the classical book by Hairer and Wanner \cite{HW2002})
\be
 (\frac32 v_2-2 v_1+\frac12 v_0)v_2 +\frac{\mu}{2} v_2^2 
 =\frac12 \left(|{\bV}_1|^2_{G(\mu) } - \frac{1}{1+\mu} |{ \bV}_0|^2_{G(\mu)}\right)
 +\frac{((1+\mu)v_2-2v_1+v_0)^2}{4(1+\mu)}
 \label{G-identity}
\ee
  where $\bV_0=[v_0, v_1]^T, \bV_1=[v_1, v_2]^T \in \mathbb{R}^2$.
  This identity can be verified easily.

 %%%%% 2.2.2    L2
  \subsubsection{Time uniform bound in $\dL2$}
  
  We are now ready to show that  the scheme (\ref{scheme})  is uniformly bounded
in $L^2$, provided that the time step is sufficiently small.
 In
order to do so,
  we take the scalar product of (\ref{scheme}) with $2k\omega^{n+1}$ in $\dL2$ and utilize the 
  generalized G-stability of the 2nd order BDF scheme \eqref{G-identity} to obtain, with $\mu=\frac{\nu k}{2}$, 
\begin{eqnarray}
    && \|\bV_{n}\|^2_{\Gnuk} - \frac{1}{1+\nu k} \|{ \bV}_{n-1}\|^2_{G(\nu k) }
     	+\frac{\|(1+\nu k)\om^{n+1}-2\om^n+\om^{n-1}\|^2_{L^2}}{2(1+\nu k)}       
	  + 2 \nu k \| \nabla\omega ^{n+1}\|^2 -\nu k \|\om^{n+1}\|^2
	  \nonumber \\
	&&  + 2k \, b(2\psi^n-\psi^{n-1}, 2\omega^n-\om^{n-1}, \omega^{n+1})= 2  k (f^{n+1}, \omega ^{n+1}), n=1, 2, \cdots
	\label{1}
	\end{eqnarray}
	where $\bV_n = [\om^{n}, \om^{n+1}]^T$, and the trilinear term $b$ is defined as 
  \be \label{trilinear}
    b(\psi,\phi, \varphi)=\int_\Omega \nabla^\perp\psi\cdot\nabla\phi \, \varphi\,d\bx.
   \ee
	
Using the Cauchy--Schwarz type inequality,  and the equivalent norm on $\dH1$ that is determined by the $L^2$ norm of the gradient, , we majorize the right-hand side of (\ref{1}) by
\begin{equation}\label{2}
      2k \|f^{n+1}\|_{H^{-1}} \|\nabla\omega ^{n+1}\|
       \leq \frac{\nu k}{2} \|\nabla\omega ^{n+1}\|^2 + \frac{2k}{\nu }\|f^{n+1}\|^2_{H^{-1}}.
\end{equation}
Utilising %property (\ref{}) of the trilinear form $b$ and
the Wente type
estimate (\ref{158}), and the skew-symmetry of the trilinear term $b$ in the last two variables \eqref{trilinear}, we bound the nonlinear term as
\begin{eqnarray}\label{3}
 &&2k \, b(\nabla^\perp(2\psi^n-\psi^{n-1}), 2\omega^n-\om^{n-1}, \omega^{n+1})
 \nonumber \\
 &= &2k \, b(2\psi^n-\psi^{n-1}, -(1+\nu k)\om^{n+1}+ 2\omega^n-\om^{n-1}, \omega^{n+1}) 
  \nonumber \\
   &=& 2k \, b(2\psi^n-\psi^{n-1}, \omega^{n+1},  (1+\nu k)\om^{n+1}-2\omega^n+\om^{n-1}) 
\nonumber \\
& \le &
  2k C_w \|\nabla^\perp(2\psi^n-\psi^{n-1})\|_{H^1}\|\nabla\omega^{n+1}\| \|(1+\nu k)\om^{n+1}-2\omega^n+\om^{n-1} \| 
   \nonumber \\
&\le&
5k C_w \|\bV_{n-1}\|\|\nabla\omega^{n+1}\| \|(1+\nu k)\om^{n+1}-2\omega^n+\om^{n-1} \| 
   \nonumber \\
&\le&
 \frac{1}{4}\|(1+\nu k)\om^{n+1}-2\omega^n+\om^{n-1} \| ^2
 +25 k^2 C_w^2  \|\bV_{n-1}\|^2\|\nabla\omega^{n+1}\|^2 .
\end{eqnarray}
Relations (\ref{1})--(\ref{3}) imply, under the assumption that $\nu k\le 1$
\begin{eqnarray}
     \|\bV_{n}\|^2_{\Gnuk} - \frac{1}{1+\nu k} \|{ \bV}_{n-1}\|^2_{G(\nu k) }         
	 + (\frac{\nu}{2} -25C_w^2 k \|\bV_{n-1}\|^2) k \| \nabla\omega ^{n+1}\|^2 
	\le   \frac{2}{\nu}k \|f^{n+1}\|^2_{H^{-1}}.
	 \label{4}
	 \end{eqnarray}

We are now able to prove the following long time global energy stability result:
%  L^2 stability
  \begin{stmt}[time uniform bound in $\dL2$]\label{t:bdh}
 Let $\omega^{n+1}$ be the solution of the numerical scheme (\ref{scheme}) and let  $f \in L^\infty(\Real_+;\ H^{-1})$, with $|f|_{\infty} := |f|_{L^\infty(\Real+; H^{-1})}$.
  Then there exists $M_{0k} = \max\{\|\bV_0\|_\Gnuk, \rho_0\}$ where $\rho_0=\frac{ 2|f|_{\infty}}{\nu}$ such that if the following time-step restriction is satisfied
  \begin{equation} \label{5a}
  k \leq \frac{\nu}{50C_w^2 C_u M_{0k}^2},
  \end{equation}
   then
 \be\label{q:bdinh}
  \frac{1}{\sqrt{C_u}}\|\bV_n\| \le  \|\bV_n\|_\Gnuk \leq M_{0k}, \, \forall \, n \geq 0,
\ee
  \be\label{q:bdv}
  \|\bV_n\|_\Gnuk^2 \le \frac{1}{(1+  {\nu}k)^n}\|\bV_0\|_\Gnuk^2
           +  \rho_0^2\left[ 1 - \frac{1}{( 1 + {\nu}k)^n}\right]
                ,
    \>\forall\, n\ge0.
 \ee
In particular, any ball in $(\dL2)^2$ of radius $\rho\ge \rho_0$ in the $\Gnuk$ norm, denoted $B_\Gnuk(\rho)$,  is invariant under the scheme.
\end{stmt}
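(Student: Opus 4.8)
The plan is to take estimate (\ref{4}) as the workhorse and run an elementary induction on $n$. Two preliminary observations make this clean. First, by Lemma \ref{wellpose} the iterates $\omega^{n+1}$ exist and lie in $\dH1$, so (\ref{4}) is legitimate for every $n\ge 1$. Second, $M_{0k}$ is a number fixed by the data alone --- through $\|\bV_0\|_\Gnuk$ and $|f|_\infty$ --- so the threshold (\ref{5a}) is not self-referential. Now suppose, as an induction hypothesis, that $\|\bV_{n-1}\|_\Gnuk\le M_{0k}$. The norm equivalence (\ref{G-equiv}) gives $\|\bV_{n-1}\|^2\le C_u\|\bV_{n-1}\|^2_\Gnuk\le C_u M_{0k}^2$, so $25C_w^2 k\|\bV_{n-1}\|^2\le 25C_w^2 C_u k M_{0k}^2\le \nu/2$ exactly when (\ref{5a}) holds; the coefficient of $k\|\nabla\omega^{n+1}\|^2$ in (\ref{4}) is then nonnegative and that term may be dropped. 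Bounding $\|f^{n+1}\|_{H^{-1}}\le|f|_\infty$ and writing $a_j:=\|\bV_j\|^2_\Gnuk$, inequality (\ref{4}) collapses to the scalar recursion
\be
 a_n \le \frac{1}{1+\nu k}\,a_{n-1} + \frac{2k}{\nu}\,|f|_\infty^2 .
 \label{plan:rec}
\ee

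To close the induction for (\ref{q:bdinh}) I would note that the base case $n=0$ is immediate from $M_{0k}=\max\{\|\bV_0\|_\Gnuk,\rho_0\}$. For the step, feed $a_{n-1}\le M_{0k}^2$ into (\ref{plan:rec}); since $\rho_0^2=4|f|_\infty^2/\nu^2$ and $\nu k\le1$, one has $\frac{2k}{\nu}|f|_\infty^2=\rho_0^2\frac{\nu k}{2}\le\rho_0^2\frac{\nu k}{1+\nu k}\le M_{0k}^2\big(1-\frac{1}{1+\nu k}\big)$, whence $a_n\le\frac{1}{1+\nu k}M_{0k}^2+M_{0k}^2\big(1-\frac{1}{1+\nu k}\big)=M_{0k}^2$. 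This gives $\|\bV_n\|_\Gnuk\le M_{0k}$ for all $n\ge0$, and then $\frac{1}{\sqrt{C_u}}\|\bV_n\|\le\|\bV_n\|_\Gnuk$ is just the lower bound in (\ref{G-equiv}).

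With every iterate now known to satisfy the hypothesis, each instance of (\ref{plan:rec}) for $1\le j\le n$ is valid; iterating it down to $j=0$ and summing the geometric series $\sum_{j=0}^{n-1}(1+\nu k)^{-j}=\frac{1+\nu k}{\nu k}\big(1-(1+\nu k)^{-n}\big)$ gives $a_n\le(1+\nu k)^{-n}a_0+\frac{2(1+\nu k)}{\nu^2}|f|_\infty^2\big(1-(1+\nu k)^{-n}\big)$, and using $\nu k\le1$ to replace $2(1+\nu k)/\nu^2$ by $4/\nu^2=\rho_0^2$ (while noting $1-(1+\nu k)^{-n}\ge0$) yields exactly (\ref{q:bdv}). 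The invariance claim is the one-step estimate once more: if $\|\bV_{n-1}\|_\Gnuk\le\rho$ with $\rho\ge\rho_0$, then (\ref{plan:rec}) with $\rho^2$ in place of $M_{0k}^2$ together with $\frac{2(1+\nu k)}{\nu^2}|f|_\infty^2\le\rho_0^2\le\rho^2$ gives $a_n\le\rho^2$, provided the time-step is small enough (the analogue of (\ref{5a}) with $\rho$ in place of $M_{0k}$) that the nonlinear coefficient in (\ref{4}) stays nonnegative.

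I do not expect a genuine obstacle at this stage: the hard work --- the Wente-type control of the advection term and the generalized G-stability identity (\ref{G-identity}) that together produced (\ref{4}) --- is already done, and what remains is a textbook uniform-Gronwall / absorbing-ball argument. The only point needing care is the bookkeeping of the induction, namely that the convex-combination structure displayed in (\ref{q:bdv}) is precisely what lets the hypothesis $\|\bV_{n-1}\|_\Gnuk\le M_{0k}$ regenerate itself at level $n$, and that the $k$-dependence of the $\Gnuk$-norm is harmless because the equivalence constants $C_l,C_u$ in (\ref{G-equiv}) are uniform over $\nu k\in[0,1]$.
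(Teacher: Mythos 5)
Your proposal is correct and follows essentially the same route as the paper: an induction on $n$ driven by estimate (\ref{4}), the $G$-norm equivalence (\ref{G-equiv}) and the time-step restriction (\ref{5a}) to discard the gradient term, followed by the convex-combination/geometric-series bookkeeping that the paper states more tersely. The only cosmetic slip is writing $4/\nu^2=\rho_0^2$ where $4|f|_\infty^2/\nu^2=\rho_0^2$ is meant; the surrounding computation is right.
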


\begin{proof}
The proof is straightforward  by induction on $n$. 
Indeed, utlising the fact that the $G$ norm is equivalent to classical norm \eqref{G-equiv}, we deduce from \eqref{4} that
\begin{eqnarray}
    \|\bV_{n}\|^2_{\Gnuk} - \frac{1}{1+\nu k} \|{ \bV}_{n-1}\|^2_{G(\nu k) }         
	 + (\frac{\nu}{2} -25C_w^2 C_u k \|\bV_{n-1}\|_\Gnuk^2) k \| \nabla\omega ^{n+1}\|^2 
	\le  \frac{\nu}{2}k \rho_0^2.
	 \end{eqnarray}
	 
	 It is clear
that (\ref{q:bdv}) and \eqref{q:bdinh} hold for  $n=0$. 

Assuming that (\ref{q:bdv}) and \eqref{q:bdinh}
hold for $n=0, \cdots, m$, we then have 
\be 
  \frac{\nu}{2} -25C_w^2 C_u k \|\bV_{m}\|_\Gnuk^2 \ge 0
 \ee
   and hence \eqref{q:bdv} remains valid for $n=m+1$ which further implies the validity of \eqref{q:bdinh} for $n=m+1$.

  We remark here that $M_{0k}$ could be replaced by a time-step $k$ independent quantity 
  \be
  M_0 = \max\{\|\bV_0\|_{G(\nu)}, \rho_0\}, \mbox{or} \ \tilde{M}_0=\max\{\frac{\|\bV_0\|}{\sqrt{C_l}}, \rho_0\}  
  \ee
   due to the monotonicity of the generalized G-norm in $\mu$ and the assumption that we consider time-step $k\le 1$.
 
 This completes the proof of theorem \ref{t:bdh}.
  \end{proof}

An immediate consequence of this theorem is the following absorbing property.
  \begin{cor}[absorbing property]\label{C1}
If the time-step is sufficiently small so that
 \be\label{q:k0}
   0 < k \leq \frac{\nu}{50C_w^2 C_u M_{0}^2}   =:k_0,
\ee 
then 
\be\label{q:tabs}
  \frac{1}{C_u}\|\bV_n\|^2 \le \|\bV_n\|_\Gnuk^2\leq 2 \rho_0^2,
        \quad \forall \, nk  \geq T_0(\|\bV_0\|_{\Gnu},|f|_\infty)
           :=\frac{4}{\nu } \ln\left(\frac{\|\bV_0\|_{\Gnu}}{\rho_0}\right).
\ee 
\end{cor}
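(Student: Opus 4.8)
The plan is to deduce Corollary \ref{C1} directly from Theorem \ref{t:bdh} by a two-phase argument: first use the decay estimate \eqref{q:bdv} to show that the trajectory enters a fixed ball after a computable time, and then invoke the invariance of that ball to conclude it stays there. The only subtlety is that Theorem \ref{t:bdh} is stated with the data-dependent quantity $M_{0k}$ (equivalently $M_0$) in the time-step restriction, whereas the corollary wants a clean statement with the absorbing radius $2\rho_0^2$; so I need to be careful that the smallness condition \eqref{q:k0} with the $k$-independent $M_0$ is genuinely enough to run Theorem \ref{t:bdh}, which it is, by the remark at the end of that proof (monotonicity of the generalized $G$-norm in $\mu$ together with $k\le 1$, so $\|\bV_0\|_{\Gnuk}\le\|\bV_0\|_{G(\nu)}$ when $\nu k\le\nu$, i.e. $k\le1$).

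First I would fix $k\le k_0$ as in \eqref{q:k0} and note that this implies the hypothesis \eqref{5a} of Theorem \ref{t:bdh} (with $M_0$ in place of $M_{0k}$, which is legitimate by the aforementioned remark), so \eqref{q:bdv} holds for all $n\ge0$:
\be
\|\bV_n\|_\Gnuk^2 \le \frac{1}{(1+\nu k)^n}\|\bV_0\|_\Gnuk^2 + \rho_0^2\left[1-\frac{1}{(1+\nu k)^n}\right] \le \frac{1}{(1+\nu k)^n}\|\bV_0\|_{\Gnu}^2 + \rho_0^2,
\ee
using again $\|\bV_0\|_\Gnuk\le\|\bV_0\|_{\Gnu}$. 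Then I want $\frac{1}{(1+\nu k)^n}\|\bV_0\|_{\Gnu}^2 \le \rho_0^2$, i.e. $(1+\nu k)^n \ge \|\bV_0\|_{\Gnu}^2/\rho_0^2$, which by taking logarithms and using $\ln(1+\nu k)\ge \tfrac{\nu k}{2}$ for $\nu k\le1$ is guaranteed once $n k \ge \frac{2}{\nu}\ln\!\big(\|\bV_0\|_{\Gnu}^2/\rho_0^2\big) = \frac{4}{\nu}\ln\!\big(\|\bV_0\|_{\Gnu}/\rho_0\big) =: T_0$. So for $nk\ge T_0$ we get $\|\bV_n\|_\Gnuk^2 \le 2\rho_0^2$, and the left inequality $\frac{1}{C_u}\|\bV_n\|^2\le\|\bV_n\|_\Gnuk^2$ is just \eqref{G-equiv}. (If $\|\bV_0\|_{\Gnu}\le\rho_0$ then $T_0\le0$ and the conclusion holds for all $n\ge0$ trivially from \eqref{q:bdv}.)

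The one remaining point is to make sure the bound $\|\bV_n\|_\Gnuk^2\le2\rho_0^2$ persists for all later $n$, not merely at the first time it is achieved; this is where I use the invariance statement at the end of Theorem \ref{t:bdh}, namely that any ball $B_\Gnuk(\rho)$ with $\rho\ge\rho_0$ is invariant under the scheme — here applied with $\rho=\sqrt2\,\rho_0$. Alternatively one can simply observe that once $\|\bV_{n_0}\|_\Gnuk^2\le2\rho_0^2$ at some $n_0$, re-running \eqref{q:bdv} from the index $n_0$ (which is permissible since the time-step restriction \eqref{5a} with $M_0$ is even more comfortably satisfied now) gives $\|\bV_n\|_\Gnuk^2\le\frac{2\rho_0^2}{(1+\nu k)^{n-n_0}}+\rho_0^2\le 2\rho_0^2$ for all $n\ge n_0$. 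I do not anticipate a genuine obstacle here; the only thing requiring a line of care is the logarithmic estimate $\ln(1+\nu k)\ge\tfrac{\nu k}{2}$ and the bookkeeping between $M_{0k}$, $M_0$ and $\tilde M_0$, all of which is already set up in the preceding material.
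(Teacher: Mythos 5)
Your argument is correct and is essentially the paper's own proof: the paper's one-line justification invokes exactly the decay estimate \eqref{q:bdv}, the norm equivalence \eqref{G-equiv}, and the inequality $1+x\ge \exp(x/2)$ for $x\in(0,1)$, which is your $\ln(1+\nu k)\ge \nu k/2$ step yielding $T_0=\frac{4}{\nu}\ln(\|\bV_0\|_{\Gnu}/\rho_0)$. The persistence issue you flag at the end is in fact automatic, since \eqref{q:bdv} holds for every $n$ and its right-hand side is at most $2\rho_0^2$ whenever $nk\ge T_0$, so no separate invariance argument is needed.
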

\begin{proof}
The corollary is an easy consequence of the time uniform bound  \eqref{q:bdv}, equivalence of the generalized G-norms \eqref{G-equiv},   and the fact that $1+x \geq
\exp(x/2)$ if $x\in(0,1)$.
\end{proof}

 \medskip

%%  generalized gronwall approach
\begin{remark}[alternative derivation of time uniform bound via discrete Gronwall type inequality with two steps] We now sketch the second approach of deriving uniform in time estimates based on the original G-stability of the 2nd order BDF scheme together with the two level generalized discrete Gronwall inequality presented in Lemma \ref{gronwall}.
We first notice that after taking the inner product of \eqref{scheme} with $\om^{n+1}$, applying the classical G-stability for 2nd order BDF scheme (corresponding to \eqref{G-identity} with $\mu=0$) , together with the same kind estimates utilized in the proof of Theorem \ref{t:bdh} leads to
\begin{eqnarray}
    \|\bV_{m}\|^2_{G} -  \|{ \bV}_{m-1}\|^2_{G} +\nu k \|\nabla\om^{m+1}\|^2        
	 + (\frac{\nu}{2} -25C_w^2 C_u  k \|\bV_{m-1}\|_G^2) k \| \nabla\omega ^{m+1}\|^2 
	\le  \frac{\nu}{2}k \rho_0^2.
  \label{G}
	 \end{eqnarray}
Multiplying \eqref{G} at $m=n$ by $\lambda\in (0,1)$ and add to \eqref{G} with $m=n+1$, we obtain, after utilizing the Poincar\'e inequality,  the equivalence of the G-norm \eqref{G-equiv}, and denoting $g^n=\|\bV_n\|^2_G, \varepsilon = \nu C_l\lambda k, \beta=\frac{(1+\lambda)\rho_0^2}{ C_l}$
\begin{eqnarray}
  (1+\varepsilon) g^{n+1} 
&\le& (1-\lambda) g^n + \lambda g^{n-1} + {\beta\varepsilon}
\\
 && - (\frac{\nu}{2} -25C_w^2  C_u k g^{n}) k \| \nabla\omega ^{n+2}\|^2
- \lambda(\frac{\nu}{2} -25C_w^2 C_u k g^{n-1}) k \| \nabla\omega ^{n+1}\|^2
\nonumber
	 \end{eqnarray}
	 where we have used the fact that
	 $$\nu k (\|\nabla\om^{n+2}\|^2 + \lambda \|\nabla\om^{n+1}\|^2) \ge \lambda\nu k \|\bV_{n+1}\|^2 \ge C_l \lambda\nu k \|\bV_{n+1}\|_G^2.$$
	 Now we assume that the following time-step restriction is satisfied:
\be 
  32 C_w^2\max\{g^n, g^{n-1}, 2\beta\}k \le {\nu}
\ee
It is then easy to see that if this time-step restriction is satisfied at $n$, then the same time step restriction is satisfied for all subsequent time thanks to Lemma \ref{gronwall} \eqref{Gstep1}. Therefore the assumption in Lemma \ref{gronwall} is valid for all $n$ provided it is valid initially.
Hence the uniform in time estimates follows from Lemma \ref{gronwall} with a long time bound independent of the initial data as a result of \eqref{Gfinal}.
The same approach works in deriving uniform bound in $\dH1$ and $\dot{H}^2_{per}$ as well when combined with the techniques from the next section.

A potential advantage of this alternative approach is the robustness of the inequality \eqref{G}.
\end{remark}

  %%%%%-- 2.3 H1
  \subsection{Time uniform bound  in $\dH1$ and $\dot{H}^2_{per}$}
  
%  The long time stability in $\dH1$ and $\dot{H}^2_{per}$ follows from classical treatment with the usual application of discrete uniform Gronwall inequality.

 For the purpose of proving the uniform in time $\dH1$ estimates on the solution to the scheme \eqref{scheme}, we simply multiply the scheme by $-\Delta\om^{n+1}$ and utilize the generalized G-stability of the 2nd oder BDF scheme to obtain
\begin{eqnarray}
    && \|\nabla\bV_{n}\|^2_{\Gnuk} - \frac{1}{1+\nu k} \|{ \nabla\bV}_{n-1}\|^2_{G(\nu k) }
     %	+\frac{\|\nabla((1+\nu k)\om^{n+1}-2\om^n+\om^{n-1})\|^2_{L^2}}{1+\nu k}       
	  + 2 \nu k \| \Delta\omega ^{n+1}\|^2 -\nu k \|\nabla\om^{n+1}\|^2
	  \nonumber \\
	&&  - 2k \, b(2\psi^n-\psi^{n-1}, 2\omega^n-\om^{n-1}, \Delta\omega^{n+1})= -2  k (f^{n+1}, \Delta\omega ^{n+1}), n=1, 2, \cdots .
	\label{h1-1}
	\end{eqnarray}

The right-hand side of (\ref{h1-1}) can be majorized by
\begin{equation}%\label{2}
      2k \|f^{n+1}\| \|\Delta\omega ^{n+1}\|
       \leq \frac{\nu k}{2} \|\Delta\omega ^{n+1}\|^2 + \frac{2k}{\nu }\|f^{n+1}\|^2.
\end{equation}
Utilising %property (\ref{}) of the trilinear form $b$ and
the Wente type
estimate (\ref{158}), we bound the nonlinear term as
\begin{eqnarray}%\label{3}
 &&2k \, b(2\psi^n-\psi^{n-1}, 2\omega^n-\om^{n-1}, \Delta\omega^{n+1})
 \nonumber \\
& \le &
  2k C_w \|\Delta(2\psi^n-\psi^{n-1})\|\|\nabla(2\om^n-\omega^{n-1})\| \|\Delta\om^{n+1} \| 
   \nonumber \\
&\le&
5k C_w \|\bV_{n-1}\|(2\|\nabla\omega^{n+1}\| +\|\nabla((1+\nu k)\om^{n+1}-2\omega^n+\om^{n-1} )\| )\|\Delta\omega^{n+1}\|
   \nonumber \\
&\le&
 \frac{1}{4}\|\nabla((1+\nu k)\om^{n+1}-2\omega^n+\om^{n-1}) \| ^2
 +25 k^2 C_w^2  \|\bV_{n-1}\|^2\|\Delta\omega^{n+1}\|^2 
 +8kC_w\|\bV_{n-1}\|\|\om^{n+1}\|^\frac12\|\Delta\om^{n+1}\|^\frac32
    \nonumber \\
&\le&
 \frac{1}{4}\|\nabla((1+\nu k)\om^{n+1}-2\omega^n+\om^{n-1}) \| ^2
 +25 k^2 C_w^2  \|\bV_{n-1}\|^2\|\Delta\omega^{n+1}\|^2 
\nonumber \\
&&
 + \frac{\nu k}{8}\|\Delta\om^{n+1}\|^2 +\frac{Ck}{\nu^3}\|\bV_{n-1}\|^4\|\om^{n+1}\|^2
 .
\end{eqnarray}
Combining the inequalities above, under the assumption that $\nu k\le 1$, we obtain
\begin{eqnarray}
  &&   \|\nabla\bV_{n}\|^2_{\Gnuk} - \frac{1}{1+\nu k} \|{ \nabla\bV}_{n-1}\|^2_{G(\nu k) }
     %	+\frac{\|\nabla((1+\nu k)\om^{n+1}-2\om^n+\om^{n-1})\|^2_{L^2}}{4}          
	 + (\frac{\nu}{2} -25C_w^2 k \|\bV_{n-1}\|^2) k \| \Delta\omega ^{n+1}\|^2 
	\nonumber \\
&\leq &   \frac{2}{\nu}k \|f^{n+1}\|^2 +\frac{Ck}{\nu^3}\|\bV_{n-1}\|^4\|\om^{n+1}\|^2.
	 \label{h1-2}
	 \end{eqnarray}

Similar to the derivation of theorem \ref{t:bdh}, we are able to show the following time uniform estimate in $\dH1$ and $\dot{H}^2_{per}$:
%  H^1  and H^2 bound
  \begin{stmt}[time uniform bound in $\dH1$ and $\dot{H}^2$]\label{t:bdv}
 Let $\omega^{n+1}$ be the solution of the numerical scheme (\ref{scheme}) and let  $f \in L^\infty(\Real_+;\ \dH1)$, with $\|f\|_{\infty} := \|f\|_{L^\infty(\Real+; \dL2)}, \|\nabla f\|_\infty:= \|\nabla f\|_{L^\infty(\Real+; \dL2)}$.
  Then there exist constants $\rho_1(\|\bV_0\|, \rho_0, \nu, \|f\|_\infty), \rho_2(\|\bV_0\|, \rho_0, \nu, \|\nabla   f\|_\infty)$ such that  if the time-step restriction \eqref{5a} is satisfied, we have, 
  \begin{eqnarray}\label{bdv}
  \|\nabla\bV_n\|_\Gnuk^2
& \le& \frac{1}{(1+  {\nu}k)^{n-2}}\|\nabla\bV_2\|_\Gnuk^2
           +  \rho^2_1(\|\bV_0\|, \rho_0, \nu, \|f\|_\infty)\left[ 1 - \frac{1}{( 1 + {\nu}k)^{n-2}}\right],
             n\ge 2 ,
\\
 \|\Delta\bV_n\|_\Gnuk^2
& \le& \frac{1}{(1+  {\nu}k)^{n-3}}\|\Delta\bV_3\|_\Gnuk^2
           +  \rho^2_2(\|\bV_0\|, \rho_0, \nu, \|\nabla f\|_\infty)\left[ 1 - \frac{1}{( 1 + {\nu}k)^{n-3}}\right],
          n\ge 3.
 \end{eqnarray}
Therefore, there exist integers $N_1(\|\bV_0\|_\Gnuk, \rho_0, \nu, \|f\|_\infty, k), N_2(\|\bV_0\|_\Gnuk, \rho_0, \nu, \|\nabla f\|_\infty, k)$ such that
\be
  \|\nabla\bV_n\|_\Gnuk \le \sqrt{2}\rho_1, \forall n \ge N_1,
\|\Delta\bV_n\|_\Gnuk \le \sqrt{2}\rho_2, \forall n \ge N_2.
\ee
\end{stmt}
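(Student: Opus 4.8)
The plan is to mirror the $\dL2$ argument from Theorem \ref{t:bdh} essentially verbatim, but now applied to the inequality \eqref{h1-2} for $\|\nabla\bV_n\|_\Gnuk^2$ and then, one level up in regularity, to the analogous inequality for $\|\Delta\bV_n\|_\Gnuk^2$. The crucial point that makes this work is that the time-step restriction is \emph{already} \eqref{5a}, i.e., the same one that controls the $\dL2$ bound; so by Theorem \ref{t:bdh} we have $\|\bV_n\|_\Gnuk \le M_{0k}$ (hence $\|\bV_n\| \le \sqrt{C_u}\,M_{0k}$) for all $n \ge 0$, and in particular $\frac{\nu}{2} - 25 C_w^2 k\|\bV_{n-1}\|^2 \ge \frac{\nu}{2} - 25 C_w^2 C_u k M_{0k}^2 \ge \frac{\nu}{4} > 0$. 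So the $\|\Delta\omega^{n+1}\|^2$ term on the left of \eqref{h1-2} has a \emph{good sign} unconditionally, with no further restriction needed and no induction on the $\dH1$ norm itself. This decouples the $\dH1$ estimate from any nonlinear feedback and reduces \eqref{h1-2} to a clean linear recursion.

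Concretely, first I would discard the nonnegative $\|\Delta\omega^{n+1}\|^2$ term on the left of \eqref{h1-2}, keeping only $\|\nabla\bV_n\|_\Gnuk^2 - \frac{1}{1+\nu k}\|\nabla\bV_{n-1}\|_{G(\nu k)}^2$ on the left. For the forcing term on the right I would use $\|f^{n+1}\| \le \|f\|_\infty$, and for the extra term $\frac{Ck}{\nu^3}\|\bV_{n-1}\|^4\|\om^{n+1}\|^2$ I would invoke the already-established uniform $\dL2$ bound: $\|\bV_{n-1}\|^4\|\om^{n+1}\|^2 \le (\sqrt{C_u}M_{0k})^6 =: C_u^3 M_{0k}^6$. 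Thus the right-hand side of \eqref{h1-2} is bounded by a constant multiple of $k$ times a fixed quantity depending only on $\|\bV_0\|, \rho_0, \nu, \|f\|_\infty$; call $\nu\rho_1^2/2$ an upper bound for $\frac{2}{\nu}\|f\|_\infty^2 + \frac{C}{\nu^3}C_u^3 M_{0k}^6$ (with an appropriate definition of $\rho_1$). Then \eqref{h1-2} becomes
\begin{equation}
  \|\nabla\bV_n\|_\Gnuk^2 \le \frac{1}{1+\nu k}\|\nabla\bV_{n-1}\|_\Gnuk^2 + \frac{\nu k}{2}\rho_1^2,
\end{equation}
using $\|\nabla\bV_{n-1}\|_{G(\nu k)} = \|\nabla\bV_{n-1}\|_\Gnuk$. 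Iterating this geometric recursion from $n=2$ downward (the base level $n=2$ is legitimate since Lemma \ref{wellpose} guarantees $\om^{n+1}\in\dH1$, $\om^{n+2}\in\dot H^2_{per}$ once $\om^n,\om^{n-1}\in\dL2$, so $\nabla\bV_2$ makes sense), and summing the geometric series $\sum_{j} \frac{\nu k/2}{(1+\nu k)^j} \le \frac12$, gives exactly the stated bound \eqref{bdv}. The $\sqrt{2}\rho_1$ absorbing bound then follows because $(1+\nu k)^{-(n-2)}\to 0$, so there is $N_1$ past which $(1+\nu k)^{-(n-2)}\|\nabla\bV_2\|_\Gnuk^2 \le \rho_1^2$.

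For the $\dot H^2_{per}$ bound I would run the identical scheme one derivative higher: multiply \eqref{scheme} by $\Delta^2\omega^{n+1}$ (equivalently, apply $-\Delta$ to the $\dH1$ computation), use the generalized G-stability identity \eqref{G-identity} applied componentwise to $\Delta\bV_n$, and bound the nonlinear term by the Wente-type estimate (Proposition \ref{wente}, inequality \eqref{158}) now in one higher norm, dumping the resulting lower-order products into the dissipation $\nu k\|\nabla\Delta\om^{n+1}\|^2$ via Young's inequality exactly as in the $\dH1$ case — with any leftover polynomial in $\|\bV_{n-1}\|$ and $\|\nabla\bV_{n-1}\|$ absorbed using the \emph{already known} uniform $\dL2$ and $\dH1$ bounds. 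The upshot is a recursion $\|\Delta\bV_n\|_\Gnuk^2 \le \frac{1}{1+\nu k}\|\Delta\bV_{n-1}\|_\Gnuk^2 + \frac{\nu k}{2}\rho_2^2$ valid for $n \ge 3$, giving the second displayed estimate and, by the same geometric-series argument, $\|\Delta\bV_n\|_\Gnuk \le \sqrt 2\,\rho_2$ for $n \ge N_2$.

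The main obstacle — really the only nontrivial point — is bookkeeping in the $\dot H^2_{per}$ step: one must check that every term arising from differentiating the trilinear form $b(2\psi^n-\psi^{n-1}, 2\om^n-\om^{n-1}, \cdot)$ twice can be controlled, after the Wente estimate, by a fractional power of $\|\Delta\om^{n+1}\|$ or $\|\nabla\Delta\om^{n+1}\|$ times a polynomial in the lower norms, so that Young's inequality puts the top-order factor under the $\nu k\|\nabla\Delta\om^{n+1}\|^2$ coming from the viscous term, and the residual is a \emph{fixed} constant (not something requiring a fresh induction). This is exactly the mechanism already displayed in the $\dH1$ computation preceding \eqref{h1-2} — the $8kC_w\|\bV_{n-1}\|\|\om^{n+1}\|^{1/2}\|\Delta\om^{n+1}\|^{3/2}$ term being the prototype — so it is routine but must be carried out with care regarding which norms are "already uniformly bounded" versus "currently being estimated." No new time-step restriction beyond \eqref{5a} is needed at any stage, which is the whole point.
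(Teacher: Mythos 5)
Your proposal is correct and follows essentially the same route as the paper: drop the $\|\Delta\omega^{n+1}\|^2$ term in \eqref{h1-2} (its coefficient is nonnegative under \eqref{5a} because it involves only the already-controlled $\dL2$ norm of $\bV_{n-1}$), bound the right-hand side by a fixed constant using Theorem \ref{t:bdh}, iterate the resulting linear geometric recursion from the base levels $n=2$ and $n=3$ supplied by Lemma \ref{wellpose}, and repeat one derivative higher via $\Delta^2\omega^{n+1}$ with the Wente/Young absorption into $\nu k\|\nabla\Delta\omega^{n+1}\|^2$. The only cosmetic slip is the claim that the good coefficient is $\ge \nu/4$ (under \eqref{5a} one only gets $\ge 0$), but nonnegativity is all that is used.
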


\begin{proof}
The bound in $\dH1$ is essentially there already.  Indeed, if the time-step restriction \eqref{5a} is valid, we have
\begin{eqnarray*}
  \|\nabla\bV_{n}\|^2_{\Gnuk} - \frac{1}{1+\nu k} \|{ \nabla\bV}_{n-1}\|^2_{G(\nu k) }
     %	+\frac{\|\nabla((1+\nu k)\om^{n+1}-2\om^n+\om^{n-1})\|^2_{L^2}}{4}          
&\leq &   \frac{2}{\nu}k \|f^{n+1}\|^2 +\frac{Ck}{\nu^3}\|\bV_{n-1}\|^4\|\om^{n+1}\|^2 
\\
&\le&  \frac{2}{\nu}k \|f^{n+1}\|^2 +\frac{Ck}{\nu^3}\left(\frac{1}{(1+\nu k)^{n-1}}\|\bV_{0}\|^6 + {\rho_0^6}\right)
\\
  && (\mbox{by Theorem \ref{t:bdh}, and the equivalence of G-norms \eqref{G-equiv}})
 \\
&\le& \nu k \rho_1^2 /2
\end{eqnarray*}
where
\be
  \rho_1^2 := \frac{4}{\nu^2}\|f\|^2_\infty + \frac{C}{\nu^4}(\|\bV_0\|^6+\rho_0^6).
\ee
We then deduce
$$ \|\nabla\bV_{n}\|^2_{\Gnuk} \le  \frac{1}{(1+\nu k)^2} \|{ \nabla\bV}_{2}\|^2_{G(\nu k) } + \rho_1^2. $$
The desired estimate $\dH1$ estimates then follows.

As for the $\dot{H}^2_{per}$ bound, we multiply \eqref{scheme} by $\Delta^2\om^{n+1}$ and utilize the following estimate on the nonlinear term:
\begin{eqnarray}%\label{3}
 &&2k \, b(2\psi^n-\psi^{n-1}, 2\omega^n-\om^{n-1}, \Delta^2\omega^{n+1})
 \nonumber \\
& \le &
  2k \, \|\nabla^\perp(2\psi^n-\psi^{n-1})\cdot\nabla(2\omega^n-\om^{n-1})\|_{H^1}\|\nabla \Delta\omega^{n+1}\|
 \nonumber \\
& \le &
  2k C_w \|\Delta(2\psi^n-\psi^{n-1})\|\|\Delta(2\om^n-\omega^{n-1})\| \|\nabla\Delta\om^{n+1} \| 
   \nonumber \\
&\le&
5k C_w \|\bV_{n-1}\|(2\|\Delta\omega^{n+1}\| +\|\Delta((1+\nu k)\om^{n+1}-2\omega^n+\om^{n-1} )\| )\|\nabla\Delta\omega^{n+1}\|
   \nonumber \\
&\le&
 \frac{1}{4}\|\Delta((1+\nu k)\om^{n+1}-2\omega^n+\om^{n-1}) \| ^2
 +25 k^2 C_w^2  \|\bV_{n-1}\|^2\|\nabla\Delta\omega^{n+1}\|^2 
 +8kC_w\|\bV_{n-1}\|\|\om^{n+1}\|^\frac13\|\nabla\Delta\om^{n+1}\|^\frac53
    \nonumber \\
&\le&
 \frac{1}{4}\|\Delta((1+\nu k)\om^{n+1}-2\omega^n+\om^{n-1}) \| ^2
 +25 k^2 C_w^2  \|\bV_{n-1}\|^2\|\nabla\Delta\omega^{n+1}\|^2 
\nonumber \\
 &&+ \frac{\nu k}{8}\|\nabla\Delta\om^{n+1}\|^2 +\frac{Ck}{\nu^5}\|\bV_{n-1}\|^6\|\om^{n+1}\|^2
 .
\end{eqnarray}
The rest of the proof are the same as those for the $\dH1$ estimate after we combine with Lemma \ref{wellpose} and notice that $\bV_3\in \dot{H}^2_{per}$ since $\bV_2\in \dH1$.  
 
 This completes the proof of theorem \ref{t:bdv}.
  \end{proof}
  %

  %%%%%%--- 3
  \section{Convergence of stationary statistical properties}
  
  The purpose of this section is to derive the main result of this paper, i.e.,  the convergence of long time stationary statistical properties of the scheme \eqref{scheme} to those of the Navier-Stokes system \eqref{NSE} as time-step approaches zero.

% 3.1
 \subsection{Dynamical system formulation}
We first recall the classical approach of writing the two step scheme \eqref{scheme} as a dynamical system on $(\dL2)^2$:
  %--- scheme as dynamical system
\be \label{scheme2}
  \bSk\left[\begin{array}{c} \om^{n}\\  \om^{n-1}\end{array}\right]
	=\left[\begin{array}{c} \om^{n+1}\\  \om^{n}\end{array}\right]
\ee
	thanks to Lemma \ref{wellpose}. 
Moreover, the rewritten scheme \eqref{scheme2} can be also viewed as a dynamical system on $B_\Gnuk(\rho), \forall \rho\ge\rho_0$ as long as the time-step restriction \eqref{5a} is satisfied thanks to Theorem \ref{t:bdh}. Furthermore, the trajectory is long time bounded in the $H^1, H^2$ norm independent of the initial data thanks to the time uniform estimates in $H^1, H^2$ that we derived in theorem \ref{t:bdv}. Therefore the dynamical system (\ref{scheme2}) possesses a unique global attractor $\mathcal{A}_k \subset B_\Gnuk(\rho_0)$ independent of $\rho$ (see for instance Temam \cite{T1997}) and uniformly bounded in $H^1, H^2$ (independent of $k$) provided that the external forcing term $f$ is time-independent.
  
Due to the dissipativity of the 2nd order BDF scheme, we are able to show that the two components of $\mathcal{A}_k$ are close in the sense that the distance is no more than a constant multiply of the time step $k$.
\begin{lm}[attractor bound and consistency]\label{l:diff}
Let $f\in \dH1$ be a time independent function.
 Then dynamical system \eqref{scheme2} is dissipative on $B_\Gnuk(\rho), \forall \rho \ge \rho_0$ as long as the time-step restriction \eqref{5a} is satisfied. Moreover, the attractors are uniformly bounded in the sense that for all $\bV\in\mAk$
\begin{eqnarray}
  \frac{1}{\sqrt{C_u}}\|\bV\| \le \|\bV\|_\Gnuk &\le & \rho_0,
\\ 
\frac{1}{\sqrt{C_u}}\|\nabla\bV\| \le \|\nabla\bV\|_\Gnuk &\le & \rho_1(\rho_0,\rho_0, \nu, \|f\|),
\\
\frac{1}{\sqrt{C_u}}\|\Delta\bV\| \le \|\Delta\bV\|_\Gnuk &\le & \rho_2(\rho_0, \rho_0,\nu, \|\nabla f\|).
\end{eqnarray}
Furthermore, we have the following consistency estimate in the sense that  there exists a constant $C_d$ independent of $k$ such that
  \be \label{diff}
  \|v_1-v_2\|+\sqrt{k}\|\nabla(v_1-v_2)\| \le C_d k, \forall \bV=[v_1,v_2]^T \in \mAk.
\ee
\end{lm}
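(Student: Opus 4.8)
The dissipativity claim and the three uniform bounds require no new computation. By Theorem~\ref{t:bdh} every ball $B_\Gnuk(\rho)$, $\rho\ge\rho_0$, is invariant and absorbing, so the global attractor satisfies $\mAk\subset B_\Gnuk(\rho_0)$, which is the $\dL2$ bound; the $\dH1$ and $\dot{H}^2_{per}$ bounds then follow from Theorem~\ref{t:bdv}, since $\mAk$ is contained in the absorbing balls produced there (evaluated at $\|\bV_0\|=\rho_0$, the harmless factor $\sqrt2$ being absorbed into $\rho_1,\rho_2$), together with the norm equivalence \eqref{G-equiv}. So the substance of the lemma is the consistency estimate \eqref{diff}.

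For \eqref{diff} I would argue on a complete orbit. Fix $\bV=[v_1,v_2]^T\in\mAk$; by invariance of the attractor there is a two-sided sequence $\{\om^j\}_{j\in\mathbb Z}$ solving \eqref{scheme} (with $f^{n+1}\equiv f$) such that $[\om^{j+1},\om^j]^T\in\mAk$ for all $j$ and $v_1=\om^n$, $v_2=\om^{n+1}$ for some $n$; in particular $\{\om^j\}$ is bounded in $\dL2$, $\dH1$ and $\dot{H}^2_{per}$, uniformly in $j$ and over all orbits. Setting $d^j:=\om^{j+1}-\om^j$ and using the algebraic identity $3\om^{n+1}-4\om^n+\om^{n-1}=3d^n-d^{n-1}$, the scheme reads
\be
 3d^n-d^{n-1}=2k\,g^n,\qquad g^n:=f+\nu\Delta\om^{n+1}-\nabla^\perp(2\psi^n-\psi^{n-1})\cdot\nabla(2\om^n-\om^{n-1}).
\ee
The key preliminary step is the $k$-independent bound $\|g^n\|\le C_1=C_1(\rho_0,\rho_1,\rho_2,\nu,\|f\|)$: it comes from the uniform $\dot{H}^2_{per}$ bound (for $\Delta\om^{n+1}$), from $-\Delta\psi^j=\om^j$ together with two-dimensional elliptic regularity and $H^2\hookrightarrow L^\infty$ (so that $\nabla^\perp(2\psi^n-\psi^{n-1})\cdot\nabla(2\om^n-\om^{n-1})$ is controlled in $\dL2$ by the uniform $\dH1$ and $\dot{H}^2_{per}$ bounds on the orbit), and from $f\in\dH1$.

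With this estimate at hand, the two parts of \eqref{diff} follow from two backward geometric iterations. From $d^n=\tfrac13 d^{n-1}+\tfrac{2k}{3}g^n$ and the fact that $\|d^{n-m}\|$ stays bounded on $\mAk$, letting $m\to\infty$ gives $\|d^n\|\le k\,C_1$. For the gradient, I would take the $\dL2$ inner product of $3d^n-d^{n-1}=2k\,g^n$ with $-\Delta d^n$, integrating by parts in the first two terms only (not in $g^n$, whose gradient is not controlled on $\mAk$); Cauchy--Schwarz and Young's inequality yield $\tfrac52\|\nabla d^n\|^2\le\tfrac12\|\nabla d^{n-1}\|^2+2k\,\|g^n\|\,\|\Delta d^n\|$, and the uniform $\dot{H}^2_{per}$ bound turns the last term into $C_2k$. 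Iterating this backward as before gives $\|\nabla d^n\|^2\le C_2k/2$, i.e. $\sqrt k\,\|\nabla d^n\|\le\sqrt{C_2/2}\,k$. Since $v_1-v_2=-d^n$, adding the two bounds yields \eqref{diff} with $C_d=C_1+\sqrt{C_2/2}$.

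The only delicate point is the legitimacy of iterating backward; this is precisely why one works on a complete orbit inside $\mAk$, and the geometric factors $3^{-m}$ and $5^{-m}$ make the far-past term $d^{n-m}$ disappear in the limit, so no $k$-dependent constant survives. I would also emphasize that the gradient estimate deliberately asks only for $\|\nabla d^n\|=O(\sqrt k)$ (hence the $\sqrt k$ weight in \eqref{diff}): this is reachable from the already-established uniform $\dot{H}^2_{per}$ bound alone, whereas upgrading to $O(k)$ would require $H^3$ control of the orbit, which is not available here.
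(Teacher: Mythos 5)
Your proof is correct, and its core is the same as the paper's: the uniform bounds are read off from Theorems \ref{t:bdh} and \ref{t:bdv} together with the norm equivalence \eqref{G-equiv}, and the $\dL2$ consistency bound comes from rewriting the scheme as $3d^n-d^{n-1}=2k\,g^n$ with $g^n$ uniformly bounded in $\dL2$ on $\mAk$ (via the $\dot H^2_{per}$ bound and the Wente estimates) and then iterating the contraction factor $1/3$ backward along a complete orbit --- which is exactly the paper's ``pull back by $\bSk^n$ and let $n\to\infty$'' argument. The one place you genuinely diverge is the gradient bound: you run a second backward geometric iteration after testing $3d^n-d^{n-1}=2k\,g^n$ with $-\Delta d^n$, whereas the paper simply interpolates, $\|\nabla d\|^2\le\|d\|\,\|\Delta d\|\le (C_d k)\cdot C$, combining the $\dL2$ consistency estimate just obtained with the uniform $\dot H^2_{per}$ bound on $\mAk$; both yield $\|\nabla(v_1-v_2)\|=O(\sqrt k)$, the paper's route being a one-line corollary while yours is self-contained but slightly longer. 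One small caveat: your closing remark that upgrading the gradient estimate to $O(k)$ ``would require $H^3$ control of the orbit'' is not quite right --- the paper observes that the $O(k)$ bound in $\dH1$ follows from testing the scheme with $-\Delta(\om^{n+1}-\om^n)$ and performing the analogous energy estimate, using only the $H^2$ information already available (the nonlinear term is controlled in $H^1$ by the last Wente inequality of Lemma \ref{wente}); this is only a side comment, however, and does not affect the validity of your proof of \eqref{diff} as stated.
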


\begin{proof}
  The uniform bound follows directly from Theorems \ref{t:bdh} and \ref{t:bdv}, and the equivalence of the G-norm \eqref{G-equiv}.

  As for the distance between the two coordinates, 
 it is straightforward from the scheme \eqref{scheme}, as well as the time uniform estimates derived in Theorem \ref{t:bdv} that are valid on the attractor $\mAk$, and the equivalence of the G-norm and the classical norm \eqref{G-equiv}, 
\begin{eqnarray*}
  \|\om^{n+1}-\om^n\|
  &\le& \frac13\|\om^n-\om^{n-1}\| + \frac23k(\|\Delta\om^{n+1}\|+\|f\|+\|\nabla^\perp(2\psi^n-\psi^{n-1})\cdot\nabla(2\om^n-\om^{n-1})\|)
\\
&\le& \frac13\|\om^n-\om^{n-1}\| + \frac23k(\|\Delta\om^{n+1}\|+\|f\|+5C_w\|\bV_{n-1}\|\|\nabla\bV_{n-1}\|)
\\
&\le& \frac13\|\om^n-\om^{n-1}\| + \frac23k(\|\Delta\bV_{n}\|_\Gnuk+\|f\|+5C_w C_u\|\bV_{n-1}\|_\Gnuk\|\nabla\bV_{n-1}\|_\Gnuk)
\\
&\le& \frac13\|\om^n-\om^{n-1}\| + \frac23k(\rho_2+\|f\|+5C_w C_u\rho_0\rho_1)
\\
&\le& \frac{1}{3^n}\|\om^1-\om^{0}\| + k(\rho_2+\|f\|+5C_w C_u\rho_0\rho_1)
\end{eqnarray*}
provided that $\bV_j=[\om^{j+1}, \om^j]^T, j=0, \cdots, n \in \mAk$.
Therefore 
\be
  \|(\bSk^n\bV_0)_2-(\bSk^n\bV_0)_1\| \le  \frac{1}{3^n}\|\om^1-\om^{0}\| + k(\rho_2+\|f\|+5C_w C_u\rho_0\rho_1)
\ee

On the other hand, thanks to the invariance of $\mAk$ under \eqref{scheme2}, for any $\bV\in\mAk$ and any integer $n$, there always exists a $\bV_0\in \mAk$ so that $\bSk^n\bV_0 = \bV$.
Letting $n$ approach infinity in the last inequality above, we deduce the desired  estimate on $v_1-v_2$.

The desired $H^1$ estimate follows from interpolating the time uniform $H^2$ estimates on the solution derived in Theorem \ref{t:bdv} and the $\dL2$ consistency estimate obtained above.

This ends the proof of the lemma.

\end{proof}

We remark that the distance between the two coordinates in the $\dH1$ norm can be improved to order $k$ as well.  This desired estimate in $H^1$ follows from a similar argument by multiplying the scheme \eqref{scheme} by $-\Delta(\om^{n+1}-\om^n)$ and perform the usual energy estimates. We leave the detail to the interested reader.

%An easy consequence of the previous  lemma is that the 
  
% 3.2
\subsection{Convergence of stationary statistical properties}

We first recall the notion of invariant measure and stationary statistical solution that characterizes the long time statistics of any given dynamical system. 

  We recall the definition of invariant measures.
%------------ definition, IM
\begin{definition} %\label{IM} 
[Invariant measures]
A  Borel probability measure $\mu_k$ on $B_\Gnuk(\rho_0)$ is called an {\it
invariant measure} for $\bSk$ if
\be
  \int_{B_\Gnuk(\rho_0)} \Psi(\bSk(\bV)) d\mu_k=\int_{B_\Gnuk(\rho_0)} \Psi(\bV) d\mu_k
\ee
  for all bounded continuous test functional $\Psi$.

  The set of all invariant measures for $\bSk$ is denoted ${\cal{IM}}_k$.

We also recall that a Borel probability measure $\mu$ on $\dL2$ is
an {\it invariant measure, or stationary statistical solution},
for the Navier-Stokes system \eqref{NSE}, if
%----- def for {\cal IM}
\begin{enumerate}
\item
    \be \int_{\dL2}
\|\nabla\om\|^2 \,d\mu(\om) < \infty,\ee
    \item
\be \int_{\dL2} <\nu\Delta\om-\nabla^\perp\psi\cdot\nabla\om+f, \Phi'(\om)>\,d\mu(\om)=0\ee
    for any cylindrical test functional $\Phi(\om)=\phi((\om, w_1), \cdots, (\om, w_m))$
    where $\phi$ is a $C^\infty$ function on $\mathbb{R}^m$, $\{w_j, j\ge1 \}$ being the standard Fourier basis which are also the eigenfunctions of $\Delta$
   in $\dL2$, and $<,>$ denotes the $H^{-1}$, $\dot{H}^1_{0,per}$ duality.
  \item
  \be
  \int_{\dL2}\int_\Omega\{\nu|\nabla\om|^2-f\om
     \}\,d{\bf x}\,d\mu(\om)\le 0.
  \ee
\end{enumerate}
    The set of all stationary statistical solutions for the
   Navier-Stokes system is denoted $\cal{IM}$.
\end{definition}

   Roughly speaking, the first condition says that the invariant
   measures are supported on the smaller and finer space of $\dH1$,
   the second condition is the differential form of the weak formulation of the invariance of
   the measure under the flow, and the third condition is a
   statistical version of the energy inequality.

We also recall the well-known fact that the Navier-Stokes system \eqref{NSE} generates a dissipative dynamical system on $\dL2$ with an absorbing ball of radius $\rho_0$ (see for instance \cite{T1997, CF1988}). Therefore, any invariant measure or stationary statistical solutions of \eqref{NSE} must be supported in $B_\dL2(\rho_0)$ in $\dL2$. Consequently the support of any invariant measure or stationary statistical solution to \eqref{NSE} must be supported on $B_\dL2(\rho_0)$ since all invariant measures are supported on the global attractor (see for instance \cite{FMRT2001, Wang2009}). Henceforth, we only need to take test functionals (observables) $\Phi$ to be supported on $B_\dL2(\rho_0)$.

Next, we observe that the invariant measures of the dynamical system defined by our numerical scheme \eqref{scheme2} and our Navier-Stokes system \eqref{NSE} do not share the same phase space. Moreover, a simple extension of the NSE to the product space is not appropriate, Therefore we should look at convergence of marginal distributions of the stationary statistical solutions to our scheme \eqref{scheme2}. Since the attractors of the scheme \eqref{scheme2} converge to the diagonal in the product space at vanishing step-size due to the consistency estimates \eqref{diff}, we anticipate that all marginal measures of the invariant measures of \eqref{scheme2} would converge to some invariant measure (stationary statistical solution) to the NSE \eqref{NSE}. Hence we anticipate the following
result which is the main finding of this manuscript.

%%%%% main result
\begin{stmt}[convergence of stationary statistical properties] \label{conv-IM}
Let $f\in \dH1$ be a time-independent function. 
Then the discrete dynamical systems defined via the scheme \eqref{scheme2} is autonomous and dissipative with non-empty set of invariant measures $\mathcal{IM}_k$. 
Denote $\mathcal{P}_j, j=1,2$  the projection from $(\dL2)^2$ onto its $j^{th}$ coordinate.
Let
$\{\mu_k, k\in (0, k_0]\}$ with $\mu_k\in \mathcal{IM}_k, \forall k$, be an arbitrary invariant measure of the numerical scheme \eqref{scheme2}.
Then each subsequence of $\{\mu_k\}$ must contain a subsubsequence (still
denoted $\{ \mu_k\}$) and two invariant measure $\mu_j$ of the
NSE \eqref{NSE} so that $\mathcal{P}_j\mu_k$ weakly converges to
$\mu_j$, i.e.,
%--
\be
  \mathcal{P}_j^*\mu_k \rightharpoonup \mu_j, k\rightarrow 0 ,
\ee
  where $\mathcal{P}_j^*\mu_k (S) = \mu_k(\mathcal{P}_j^{-1}(S)), \forall S \in \mathcal{B}(\dL2)$.
\end{stmt}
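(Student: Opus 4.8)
The plan is to adapt the Liouville-equation approach of Cheng and Wang \cite{ChengWang2008} to the product-space setting, establishing three things: that $\mathcal{IM}_k$ is nonempty and the marginals $\mathcal{P}_j^*\mu_k$ form a tight family; that every weak limit along a subsequence of the marginals solves the Liouville (invariance) equation for \eqref{NSE}; and that it also satisfies the statistical energy inequality. First I would dispose of existence and compactness. By Lemma \ref{wellpose} the map $\bSk$ is continuous on $(\dL2)^2$, and by Theorems \ref{t:bdh}--\ref{t:bdv} and Lemma \ref{l:diff}, once the time-step restriction \eqref{5a} holds, $\bSk$ is dissipative with a global attractor $\mAk$ that is compact in $(\dL2)^2$ and bounded in $\dot{H}^2_{per}\times\dot{H}^2_{per}$ by a constant independent of $k$. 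The Krylov--Bogoliubov procedure (averaging Dirac masses along an orbit in $\mAk$ and passing to a weak-$*$ limit) then yields a nonempty $\mathcal{IM}_k$, and each $\mu_k\in\mathcal{IM}_k$ is supported on $\mAk$, since invariant measures live on the global attractor (see \cite{FMRT2001, Wang2009}). Because $\mathcal{P}_2\circ\bSk=\mathcal{P}_1$, invariance gives $\mathcal{P}_1^*\mu_k=\mathcal{P}_2^*\mu_k=:\nu_k$, and the uniform $\dot{H}^2_{per}$ bound on $\mAk$ means every $\nu_k$ is carried by one fixed ball of $\dot{H}^2_{per}$, which is compact in $\dL2$ by Rellich. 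Hence $\{\nu_k\}$ is tight; by Prokhorov, each subsequence has a subsubsequence along which $\nu_k\rightharpoonup\mu$ for a Borel probability measure $\mu$ on $\dL2$ supported in that same $\dot{H}^2_{per}$ ball. Condition $1$ in the definition of a stationary statistical solution is then immediate, and $\mu_1=\mu_2=\mu$.

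For condition $2$, fix a cylindrical functional $\Phi(\om)=\phi((\om,w_1),\dots,(\om,w_m))$. Since $\om^{n+1},\om^n,\om^{n-1}$ all have law $\nu_k$ under $\mu_k$ (invariance together with $\mathcal{P}_2\circ\bSk=\mathcal{P}_1$), one has
\be
  \int\Bigl(3\,[\Phi(\om^{n+1})-\Phi(\om^n)]-[\Phi(\om^n)-\Phi(\om^{n-1})]\Bigr)\,d\mu_k=0 .
\ee
A second-order Taylor expansion of $\Phi$ about $\om^n$ rewrites the integrand as $\langle\Phi'(\om^n),\,3\om^{n+1}-4\om^n+\om^{n-1}\rangle$ plus a remainder of size $O(\|\om^{n+1}-\om^n\|^2+\|\om^n-\om^{n-1}\|^2)$, which is $O(k^2)$ on $\mAk$ by the consistency estimate \eqref{diff}. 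Substituting $\tfrac{3\om^{n+1}-4\om^n+\om^{n-1}}{2k}$ via the scheme \eqref{scheme}, dividing by $2k$, and using \eqref{diff}, the uniform $\dot{H}^2_{per}$ bound and the skew-symmetry of $b$ to replace $2\psi^n-\psi^{n-1}$, $2\om^n-\om^{n-1}$ and $\Delta\om^{n+1}$ by $\psi^n$, $\om^n$, $\Delta\om^n$ at the cost of $O(k)$ errors, I would arrive at
\be
  \int_{\dL2}\bigl\langle\Phi'(\om),\,\nu\Delta\om-\nabla^\perp\psi\cdot\nabla\om+f\bigr\rangle\,d\nu_k(\om)=O(k).
\ee
The map $\om\mapsto\langle\Phi'(\om),\nu\Delta\om-\nabla^\perp\psi\cdot\nabla\om+f\rangle$ is continuous and bounded on the fixed $\dot{H}^2_{per}$ ball carrying the $\dL2$ topology (pair $\Delta\om$ against the smooth $\Phi'(\om)$ after two integrations by parts; for the advection term use $\dL2$ convergence and interpolation against the uniform $\dot{H}^2_{per}$ bound). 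Letting $k\to0$ along the subsubsequence then gives condition $2$ for $\mu$.

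Condition $3$ follows from integrating the exact identity \eqref{1} against $\mu_k$. Invariance gives $\int\|\bV_n\|^2_{\Gnuk}\,d\mu_k=\int\|\bV_{n-1}\|^2_{\Gnuk}\,d\mu_k$ and $\int\|\om^n\|^2\,d\mu_k=\int\|\om^{n+1}\|^2\,d\mu_k$; expanding $G(\nu k)$ via \eqref{G_matrix} and using \eqref{diff} on $\mAk$ gives $\|\bV_{n-1}\|^2_{\Gnuk}=\|\om^n\|^2+O(k)$, so the $\|\om\|^2$ contributions cancel. The quadratic term $\tfrac{\|(1+\nu k)\om^{n+1}-2\om^n+\om^{n-1}\|^2}{2(1+\nu k)}$ is nonnegative and may be dropped, and by skew-symmetry plus \eqref{diff}, $2k\,b(2\psi^n-\psi^{n-1},2\om^n-\om^{n-1},\om^{n+1})=2k\,b(2\psi^n-\psi^{n-1},2\om^n-\om^{n-1},\om^{n+1}-2\om^n+\om^{n-1})=O(k^2)$. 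Dividing by $2k$ leaves $\nu\int\|\nabla\om\|^2\,d\nu_k\le\int(f,\om)\,d\nu_k+Ck$; passing to the limit (the left side lower semicontinuous, the right side continuous, on the $\dL2$-compact $\dot{H}^2_{per}$ ball) yields $\int_{\dL2}\int_\Omega(\nu|\nabla\om|^2-f\om)\,d\bx\,d\mu\le0$. Hence $\mu$ is a stationary statistical solution of \eqref{NSE}, which is what is claimed.

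The hard part is the second step: forcing every error term to be genuinely $o(1)$ uniformly over $\mathrm{supp}\,\mu_k=\mAk$. This rests on the full chain of $k$-independent regularity --- the $\dH1$ and $\dot{H}^2_{per}$ bounds of Theorem \ref{t:bdv} and the consistency estimate \eqref{diff} of Lemma \ref{l:diff} --- without which neither the Taylor remainder nor the replacement of the Adams--Bashforth extrapolants $2\psi^n-\psi^{n-1}$, $2\om^n-\om^{n-1}$ by the diagonal values $\psi^n$, $\om^n$ could be controlled, and the limiting Liouville functional would fail to be continuous on a compact set, blocking the passage to the limit.
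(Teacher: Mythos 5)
Your proposal is correct and follows the same overall architecture as the paper (Krylov--Bogoliubov for non-emptiness, tightness/Prokhorov for compactness, then verification of the three conditions in the definition of a stationary statistical solution via the Liouville-equation approach of Cheng--Wang, all resting on the $k$-independent bounds of Theorem \ref{t:bdv} and the consistency estimate \eqref{diff}). There are, however, two genuine variations worth noting. First, the paper extracts a weak limit $\mu_0$ of $\mu_k$ on the product space $(\dL2)^2$ and only then pushes forward by $\mathcal{P}_j$, whereas you work with the marginals from the outset and observe that invariance together with $\mathcal{P}_2\circ\bSk=\mathcal{P}_1$ forces $\mathcal{P}_1^*\mu_k=\mathcal{P}_2^*\mu_k$; this is a small sharpening (the two limit measures coincide) that the paper does not state. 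Second, for the invariance condition the paper converts the BDF2 difference quotient into a single telescoping difference of the composite observable $\Phi(\bV\cdot[-\tfrac12,\tfrac32]^T)=\Phi(\tfrac32 v_2-\tfrac12 v_1)$, exploiting $3\om^{n+1}-4\om^n+\om^{n-1}=2\bigl[(\tfrac32\om^{n+1}-\tfrac12\om^{n})-(\tfrac32\om^{n}-\tfrac12\om^{n-1})\bigr]$ and applying invariance once to that functional; you instead use that all three time levels share the same marginal law and a second-order Taylor expansion of $\Phi$ about $\om^n$. The two devices are equivalent in substance and both hinge on \eqref{diff} to control the quadratic remainders. For the energy inequality you start from identity \eqref{1} (with $\mu=\nu k$) rather than the $\mu=0$ G-identity the paper uses, which leaves the extra bookkeeping term $\frac{\nu k}{1+\nu k}\int\|\bV\|^2_{\Gnuk}\,d\mu_k-\nu k\int\|\om^{n+1}\|^2\,d\mu_k$; your claim that this is $O(k^2)$ on $\mAk$ is correct via \eqref{diff} and the explicit form \eqref{G_matrix}, so the argument closes, just with slightly more effort than the paper's route.
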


\begin{proof}
The non-emptiness of the set of the invariant measures $\mathcal{IM}_k$ follows from the 
  uniform estimates in $H^1$ on the global attractors of the scheme \eqref{scheme2} proved in Lemma \ref{diff}, as well as the classical Bogliubov-Krylov technique (see for instance \cite{FMRT2001, VF1988}).
  
We observe that the family of Borel measures $\{\mu_k, k\in (0, k_0]\}$ is tight in the space of probability measures on $(\dL2)^2$ (after trivial extension to the outside of the absorbing ball) (see for instance Billingsley \cite{B1971}, or Lax \cite{L2002}). Therefore, for any subsequence of $\{\mu_k, k\in (0, k_0]\}$, there exists a subsubsequence, still denoted $\{\mu_k, k\in (0, k_0]\}$, and a Borel probability measure $\mu_0$ on $(\dL2)^2$ such that $\mu_k$ weakly converges to $\mu_0$. Our goal here is to show that the marginal measures of $\mu_0$  are in fact stationary statistical solutions of the Navier-Stokes system \eqref{NSE}, i.e.  $\mathcal{P}_j^*\mu_0\in \mathcal{IM}$. We work on the case of $j=1$ without loss of generality. The proof is similar to those presented in Cheng and Wang \cite{ChengWang2008} by utilizing the differential form of the invariant measure (stationary statistical solution).

 The first condition in the definition is easily verified since
  the global attractors for the discrete dynamical systems \eqref{scheme2} are
  uniformly bounded in $\dH1$ independent of the time step $k$ thanks to Lemma \ref{diff}, and the fact that the invariant measures are
  supported on the global attractor \cite{FMRT2001, Wang2009}.

  In order to check the second condition, i.e., the differential form of the weak formulation of invariance,
we let $\Phi(\om)=\phi((\om, w_1), \cdots, (\om,
w_m))=\phi(z_1, \cdots, z_m)$ be a smooth cylindrical test functional.
Notice that
\be
  \Phi'(\om)=\sum_{j=1}^m \frac{\partial}{\partial z_j}\phi((\om, w_1), \cdots, (\om,
  w_m))w_j,
\ee
  hence, denoting $<, >$ the duality between $H^{-1}$ and
  $\dH1$, we have
%---
\begin{eqnarray*}
&& \int_{\dL2} \left< \nu\Delta\om-\nabla^\perp\psi\cdot\nabla\omega+f,
     \Phi'(\om)\right>\,d\mathcal{P}_1^*\mu_0(\om)
\\
&=& 
 \int_{(\dL2)^2} \left< \nu\Delta v_1-\nabla^\perp\psi_1\cdot\nabla v_1+f,
     \Phi'(v_1)\right>\,d\mu_0(\bV) \quad (\bV=[v_1, v_2]^T, -\Delta\psi_j=v_j, j=1,2)
\\
  && (\mbox{by definition of marginal measure})
\\
&=& 
\int_{(\dL2)^2} \sum_{j=1}^m \frac{\partial\phi}{\partial
z_j}\int_\Omega \left(\nu v_1\Delta w_j +\nabla^\perp\psi_1\cdot\nabla w_j v_1 + f w_j\right)\,d{\bf x}\,d\mu_0(\bV)
\\
  && (\mbox{by the choice of the cylindrical test functional $\Phi$ and integrations by parts})
\\
&=& \lim_{k\rightarrow 0}
\int_{(\dL2)^2} \sum_{j=1}^m \frac{\partial\phi}{\partial
z_j}\int_\Omega \left(\nu v_1\Delta w_j +\nabla^\perp\psi_1\cdot\nabla w_j v_1 + f w_j\right)\,d{\bf x}\,d\mu_k(\bV)
\\
  && (\mbox{by the definition of weak convergence})
\\
&=& \lim_{k\rightarrow 0}
\int_{(\dL2)^2} \sum_{j=1}^m \frac{\partial\phi}{\partial
z_j}\int_\Omega \left(\nu\bSk(\bV)_2\Delta w_j +\nabla^\perp(2\psi_2-\psi_1)\cdot\nabla w_j (2v_2-v_1) + f w_j\right)\,d{\bf x}\,d\mu_k(\bV)
\\
  && (\mbox{by the consistency estimate \eqref{diff} as well as the choice of the cylindrical test functional})
\\
&=&
 \lim_{k\rightarrow 0}
\int_{(\dL2)^2} \left<\nu\Delta\bSk(\bV)_2-\nabla^\perp(2\psi_2-\psi_1)\cdot\nabla (2v_2-v_1) + f, \Phi'(v_1)\right>\,d\mu_k(\bV)
\\
  && (\mbox{by the choice of the cylindrical  test functional})
\\
&=&
 \lim_{k\rightarrow 0}
\int_{(\dL2)^2} \left<\nu\Delta\bSk(\bV)_2-\nabla^\perp(2\psi_2-\psi_1)\cdot\nabla (2v_2-v_1) + f, \Phi'(\frac32v_2-\frac12v_1)\right>\,d\mu_k(\bV)
\\
  && (\mbox{by the consistency estimate \eqref{diff} as well as the choice of the cylindrical test functional})
\\
&=&  \lim_{k\rightarrow 0}
\int_{(\dL2)^2} \left<[-\frac12,\frac{3}{2}]^T \cdot \frac{\bSk(\bV)-\bV}{k}, \Phi'(\frac32v_2-\frac12 v_1)\right>\,d\mu_k(\bV)
\\
  && (\mbox{according to the scheme \eqref{scheme}, \eqref{scheme2}}) 
\\
&=& \lim_{k\rightarrow 0}\int_{(\dL2)^2} \frac{1}{k}
     (\Phi(\bSk(\bV)\cdot[-\frac12,\frac32]^T)-\Phi(\bV\cdot[-\frac12,\frac32]^T))\,d\mu_k(\bV)
\\
	&&(\mbox{by calculus and Lemma \ref{diff}})
\\
&=& 0
\\
&& (\mbox{by the invariance of }\ \mu_k \ \mbox{under}\ \bSk \ \mbox{applied to the test functional}\ \Phi(\bV\cdot[-\frac12,\frac32]^T))
\end{eqnarray*}
where we have used the boundedness and continuity of
$\frac{\partial\phi}{\partial z_j}$ on the union of the support of
$\mu_k$, the consistency estimate (\ref{diff}), the invariance
of $\mu_k$ under $\bSk$.

 This proves the differential form of the weak invariance of $\mathcal{P}_1^*\mu_0$ under the 2D Navier-Stokes dynamics, i.e., no. 2.

  The energy type
inequality no.3 can be verified easily as well. For this purpose,
we first show that any invariant measure $\mu_k$ of the numerical
scheme (\ref{scheme}) must satisfy an energy type estimate.
The desired continuous one will be the limit as the time step
approaches zero.

Taking the inner product of the scheme
(\ref{scheme}) with $\om^{n+1}$ and we have
%--
\begin{eqnarray*}
 &&   \frac{1}{2k}(\|\bV^{n}\|_G^2 - \|\bV^{n-1}\|_G^2
+\|\om^{n+1} -2\om^n
  -\om^{n-1}\|^2)
  +\nu\|\nabla\om^{n+1}\|^2
\\
&&
  +\int_\Omega (\nabla^\perp(2\psi^n-\psi^{n-1})\cdot\nabla(2\om^n-\om^{n-1})\om^{n+1}
  -f \om^{n+1})=0 .
\end{eqnarray*}
  This can be re-written using the discrete dynamical system
  notation $\bSk$ as
%--
\begin{eqnarray*}
&&    \frac{1}{2k}(\|\bSk(\bV)\|_G^2 - \|\bV\|_G^2
+\|(\bSk(\bV) - \bV)\cdot [1,-1]^T\|^2)
  +\nu\|\nabla (\bSk(\bV))_2\|^2
\\
&&
  +\int_\Omega  (\nabla^\perp(2\psi_2-\psi_{1})\cdot\nabla(2v_2-v_1)-f) (\bSk(\bV))_2 =0 .
\end{eqnarray*}

  Integrating this identity with respect to the invariant measure $\mu_k$ and letting $k\rightarrow 0$ we have
%--
\begin{eqnarray*}
 0  &\ge &  -\liminf_{k\rightarrow 0}\frac{1}{2k} \int_{(\dL2)^2}\|(\bSk(\bV) - \bV)\cdot [1,-1]^T\|^2 d\mu_k(\bV)
\\
&=&  \liminf_{k\rightarrow 0}
\int_{(\dL2)^2}\int_\Omega (\nu|\nabla  (\bSk(\bV))_2|^2
 +(\nabla^\perp(2\psi_2-\psi_{1})\cdot\nabla(2v_2-v_1)-f) (\bSk(\bV))_2)\,d\bx\,d\mu_k(\bV)
\\
&\ge&\int_{(\dL2)^2}\int_\Omega \left(\nu|\nabla v_1|^2
 +(\nabla^\perp\psi_{1}\cdot\nabla v_1-f) v_1\right)\,d\bx\,d\mu_0(\bV)
\\
&=&\int_{\dL2}\int_\Omega\{\nu|\nabla\om|^2-f\om
     \}\,d{\bf x}\,d\mathcal{P}_1^*\mu_0(\om),
\end{eqnarray*}
where we have utilized the invariance of $\mu_k$ under $\bSk$ and the following estimates
\begin{eqnarray*}
    \int_{(\dL2)^2} \|\nabla (\bSk(\bV))_2 \|^2 \,d\mu_k(\bV)
 &=& \int_{(\dL2)^2} \lim_{m\rightarrow\infty}
  \sum_{j=1}^m   \frac{(\nabla  (\bSk(\bV))_2, \nabla w_j)^2}{\|\nabla
  w_j\|^2}  \,d\mu_k(\bV)
\\
  && (\mbox{since} \ \{w_j\} \ \mbox{form an orthogonal basis in}\ \dH1)
\\
&=& \lim_{m\rightarrow\infty}
  \sum_{j=1}^m  \int_{(\dL2)^2} \frac{(\nabla  (\bSk(\bV))_2, \nabla w_j)^2}{\|\nabla
  w_j\|^2}  \,d\mu_k(\bV)
\\
  && (\mbox{Lebesque dominated convergence theorem})
\\
 &=& \lim_{m\rightarrow\infty}
  \sum_{j=1}^m  \int_{(\dL2)^2} \frac{(  (\bSk(\bV))_2, \Delta w_j)^2}{\|\nabla
  w_j\|^2}  \,d\mu_k(\bV)
\\
&& (\mbox{integration by parts})
\\
 &=& \lim_{m\rightarrow\infty}
  \sum_{j=1}^m  \int_{(\dL2)^2} \frac{(  v_2, \Delta w_j)^2}{\|\nabla
  w_j\|^2}  \,d\mu_k(\bV)
\\
&& (\mbox{invariance of $\mu_k$ under $\bSk$})
\\
 &=& \lim_{m\rightarrow\infty}
  \sum_{j=1}^m  \int_{(\dL2)^2} \frac{(  (\bSk(\bV))_1, \Delta w_j)^2}{\|\nabla
  w_j\|^2}  \,d\mu_k(\bV)
\\
&& (\mbox{notation following \eqref{scheme2}})
\\
 &=& \lim_{m\rightarrow\infty}
  \sum_{j=1}^m  \int_{(\dL2)^2} \frac{(  v_1, \Delta w_j)^2}{\|\nabla
  w_j\|^2}  \,d\mu_k(\bV)
\\
&& (\mbox{invariance of $\mu_k$ under $\bSk$ again})
\\
&=&
 \int_{(\dL2)^2} \|\nabla v_1 \|^2 \,d\mu_k(\bV),
\end{eqnarray*}
as well as 
\begin{eqnarray*}
\int_{(\dL2)^2} \|\nabla v_1 \|^2 \,d\mu_0(\bV)
&=& \lim_{m\rightarrow\infty}
  \sum_{j=1}^m  \int_{(\dL2)^2} \frac{(\nabla v_1, \nabla w_j)^2}{\|\nabla
  w_j\|^2}  \,d\mu_0(\bV)
\\
&=& \lim_{m\rightarrow\infty}\lim_{k\rightarrow 0}
  \sum_{j=1}^m  \int_{(\dL2)^2} \frac{(\nabla v_1, \nabla w_j)^2}{\|\nabla
  w_j\|^2}  \,d\mu_k(\bV)
\\
&\le& \liminf_{k\rightarrow 0}
  \sum_{j=1}^\infty  \int_{(\dL2)^2} \frac{(\nabla v_1, \nabla w_j)^2}{\|\nabla
  w_j\|^2}  \,d\mu_k(\bV)
\\
&=& \liminf_{k\rightarrow 0} \int_{(\dL2)^2} \|\nabla v_1\|^2 \,d\mu_k(\bV),
\end{eqnarray*}
together with the fact that on the support of $\mu_k$
%--
\begin{eqnarray*}
&&|b(2\psi_2-\psi_1, 2v_2-v_1, (\bSk(\bV))_2) - b(\psi_1, v_1, v_1)|
\\
&\le& 
|b(2\psi_2-\psi_1, 2v_2-v_1, (\bSk(\bV))_2-v_1)|
+|b(2\psi_2-\psi_1, 2(v_2-v_1), v_1)|
+|b(2(\psi_2-\psi_1), v_1, v_1)|
\\
&\le&
5C_w \|\nabla\bV\|^2 (\| (\bSk(\bV))_2-v_1)\|+\|v_2-v_1\|)
\\
&\le&
\mathcal{O}(k)
\end{eqnarray*}
thanks to Lemma \ref{diff} and the Wente type estimates Lemma \ref{wente}.

This completes the proof of the energy type inequality (no.3 in
the definition) for the limit probability measure $\mathcal{P}_1^*\mu_0$. Therefore
we conclude that the limit $\mathcal{P}_1^*\mu_0$ must be an invariant measure of
the 2D Navier-Stokes system.

\end{proof}

\medskip

\begin{remark}[attractor convergence] Since the most important part of the global attractor must lie in the support of at least one of the invariant measures of the system, the convergence of invariant measures that we derived in Theorem \ref{conv-IM} already implies the convergence of an important part of the global attractor. Convergence of the whole global attractor (in an upper semi-continuous fashion)  can be discussed as well following the "monoid" framework introduce by Hill and Suli \cite{HillSuli1995}. One would work with the so-called "monoid" $[1,1]^T S(t) [-\frac12,\frac32]: (\dL2)^2 \rightarrow (\dL2)^2$. This is not a semi-group since it does not satisfy the requirement that it is the identity map at time zero.
We would like to point out that the convergence of the global attractors bears no implication on the convergence of invariant measures since knowing the support of the measure provides little info on the measure itself.
\end{remark}

  %---4
  \section{Fully discretized case}
  
  The analysis above can be carried over to the fully discretized case with either Galerkin Fourier spectral approximation or collocation Fourier spectral approximation (see the classical books by  Gottlieb and Orszag \cite{GO1977}, or Canuto, Hussaini, Quarteroni and Zang \cite{CHQZ1988}, or Peyret \cite{P2002} for more on spectral methods as well as their applications in computational  fluid dynamics.).

  %--- 4.1
  \subsection{Galerkin Fourier spectral approximation}

This subsection is devoted to the long time stability of the following Galerkin Fourier spectral in space and BDF2-AB2 in time approximation of the two dimensional Navier--Stokes equations
   %--- Galerkin scheme
  \be
    \frac{3\omega_N^{n+1}-4\omega_N^n+\om_N^{n-1}}{2k} + P_N(\nabla^\perp(2\psi_N^n-\psi_N^{n-1})\cdot\nabla(2\omega_N^n-\om_N^{n-1})) - \nu\Delta\omega_N^{n+1} = P_N(f^{n+1}).
\label{Galerkin_scheme}
   \ee
 where $\omega_N^n, \psi_N^n  \in \mathcal{P}_N$,  $\mathcal{P}_N = \{\mbox{all\ trigonometric\ functions\ on\ $\Omega$ \ with \ frequency\  in\  each\  direction\  at\ most\  $N$}\}$, $P_N$ is defined as the orthogonal projection from $\dot{L}^2(\Omega)$ onto $\mathcal{P}_N$.

Just like  the semi-discrete scheme \eqref{scheme}, we can  show that  the scheme (\ref{Galerkin_scheme})  is uniformly bounded
in various spaces, provided that the time step is sufficiently small. More precisely, we have the following:

 \begin{stmt}[uniform in time bounds on the Galerkin Fourier BDF2AB2 scheme] \label{t:bdGF}
 Let $\omega_N^n$ be the solution of the numerical scheme (\ref{Galerkin_scheme}) and let  $f \in L^\infty(\Real_+;\dH1)$. Then the same estimates as those stated in Theorems \ref{t:bdh}, \ref{t:bdv} hold under the same time-step restriction \eqref{5a}. In particular, the Galerkin Fourier spectral scheme \eqref{Galerkin_scheme} is long time stable in the sense that the solution is uniformly bounded in time and in truncation wave number $N$ using either the $\dL2$ or $\dH1$ or $\dot{H}^2_{per}$ norm.
\end{stmt}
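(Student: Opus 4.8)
The plan is to show that nothing essential changes when one passes from the semi-discrete scheme \eqref{scheme} to its Galerkin truncation \eqref{Galerkin_scheme}: every estimate of Section~2 is reproduced verbatim once four elementary properties of the Fourier projection $P_N$ are recorded, namely that it is (i) self-adjoint in $L^2$, (ii) commuting with $\Delta$ and with every constant-coefficient differential operator, (iii) a contraction on each space $\dot{H}^s_{per}$ (it merely truncates Fourier modes), and (iv) the identity on $\mathcal{P}_N$. Property (iv) does the main work: since $-\Delta(2\psi_N^n-\psi_N^{n-1})=2\om_N^n-\om_N^{n-1}\in\mathcal{P}_N$ forces $2\psi_N^n-\psi_N^{n-1}\in\mathcal{P}_N$, and since every test function used below --- $\om_N^{n+1}$, $-\Delta\om_N^{n+1}$, $\Delta^2\om_N^{n+1}$ --- lies in $\mathcal{P}_N$, the projection in front of the advection term and in front of $f^{n+1}$ disappears upon testing, e.g.
\begin{equation}
 \bigl(P_N(\nabla^\perp(2\psi_N^n-\psi_N^{n-1})\cdot\nabla(2\om_N^n-\om_N^{n-1})),\,\om_N^{n+1}\bigr)=b(2\psi_N^n-\psi_N^{n-1},\,2\om_N^n-\om_N^{n-1},\,\om_N^{n+1}).
\end{equation}
Hence the trilinear quantity one is left with after testing is exactly the one treated in \eqref{1}--\eqref{3}; the skew-symmetry $b(\psi,\phi,\varphi)=-b(\psi,\varphi,\phi)$, valid for trigonometric polynomials because $\nabla^\perp\psi$ is divergence free, is available; and the Wente-type bound of Proposition~\ref{wente} applies directly to $\nabla^\perp(2\psi_N^n-\psi_N^{n-1})$ and $2\om_N^n-\om_N^{n-1}$.

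With this in hand the argument becomes mechanical, and I would carry it out in the following order. First, the analog of Lemma~\ref{wellpose}: \eqref{Galerkin_scheme} is a linear constant-coefficient Poisson problem inside $\mathcal{P}_N$ whose right-hand side lies in $\dot{H}^{-1}_{per}$ with $\|P_N(\cdots)\|_{\dot{H}^{-1}_{per}}\le\|\cdots\|_{\dot{H}^{-1}_{per}}\le C_w(2\|\om_N^n\|+\|\om_N^{n-1}\|)^2$ by (iii) and Proposition~\ref{wente}, so it is uniquely solvable and the $\dot{L}^2$/$\dot{H}^1$ and $\dot{H}^1$/$\dot{H}^2$ bounds follow as before. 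Next, I would test \eqref{Galerkin_scheme} with $2k\om_N^{n+1}$ and invoke the purely algebraic generalized G-stability identity \eqref{G-identity} --- unchanged, as are the $G(\mu)$-norms and their equivalence \eqref{G-equiv} --- to recover \eqref{1}; Cauchy--Schwarz, Poincar\'e, and the displayed nonlinear estimate then give \eqref{4} with every $\om^j$ replaced by $\om_N^j$ and the same constants $C_w,C_u,C_l$. Testing with $-2k\Delta\om_N^{n+1}$ (again in $\mathcal{P}_N$, the integration by parts being legitimate for trigonometric polynomials) reproduces \eqref{h1-1}--\eqref{h1-2}, and testing with $2k\Delta^2\om_N^{n+1}$ yields the $\dot{H}^2_{per}$ inequality underlying Theorem~\ref{t:bdv}. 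Finally I would apply the inductions in the proofs of Theorems~\ref{t:bdh} and~\ref{t:bdv} word for word; since none of the constants depends on $N$, this delivers the bounds uniformly in both $n$ and $N$ under the same restriction \eqref{5a}.

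The one point I expect to require care --- and hence the (mild) main obstacle --- is checking that the truncation never obstructs an integration by parts and never produces a commutator error. Because $P_N$ commutes with $\nabla$ and with $\Delta$, the only non-commuting object is the nonlinear product $\nabla^\perp\psi_N\cdot\nabla\om_N$, and this is always tested against an element of $\mathcal{P}_N$, so by (i) and (iv) the projection on it is invisible and one is left with the same trilinear form as in the continuous-in-space case; equivalently, since $P_N(f^{n+1}-\nabla^\perp(\cdots)\cdot\nabla(\cdots))\in\mathcal{P}_N$ and $\Delta$ preserves $\mathcal{P}_N$, the entire computation takes place inside the finite-dimensional space $\mathcal{P}_N$. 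I would accordingly structure the write-up as: (1) list (i)--(iv); (2) observe that the semi-discrete derivations of \eqref{4}, \eqref{h1-2}, and the $\dot{H}^2_{per}$ inequality used only the algebraic identity \eqref{G-identity}, Cauchy--Schwarz, Poincar\'e, and Proposition~\ref{wente}, all stable under restriction to $\mathcal{P}_N$; (3) conclude via the inductions of Theorems~\ref{t:bdh} and~\ref{t:bdv}, all constants being $N$-independent.
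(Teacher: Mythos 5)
Your proposal is correct and takes essentially the same route as the paper: the paper's own ``proof'' of Theorem~\ref{t:bdGF} consists of the single remark that the argument is a verbatim copy of the semi-discrete case, and your write-up supplies exactly the details (the self-adjointness of $P_N$, its commutation with constant-coefficient operators, and the fact that the projection disappears when tested against $\om_N^{n+1}$, $-\Delta\om_N^{n+1}$, $\Delta^2\om_N^{n+1}\in\mathcal{P}_N$) that justify that claim with constants independent of $N$.
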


The proof is essentially a verbatim copy of that of the semi-discrete in time case and we leave it to the interested reader.

% 4.2
\subsection{Fourier collocation approximation}

We now consider Fourier collocation spectral approximation of the semi-discrete scheme \eqref{scheme}. The collocation spectral approximation may be desirable since fast Fourier transform can be utilized to evaluate the nonlinear advection term efficiently.  Here we follow a recent work by Gottlieb, Tone, Wang, Wang and Wirosoetisno \cite{GTWWW2011}.
% by using an alternative form of the nonlinear advection term due to Temam \cite{Temam1966}, and utilize $\|\nabla\psi\|_{L^\infty}$ estimate instead of the Wente type estimate. The $\|\nabla\psi\|_{L^\infty}$ norm is itself bounded by $\|\psi\|^\epsilon_{H^3} \|\psi\|^{1-\epsilon}_{H^2}, \forall \epsilon \in (0,1)$. 

% 4.2.1
\subsubsection{Fourier collocation notation} %spectral differentiation, extension and interpolation}

In order to present our fully discrete scheme with collocation spectral approximation, we need to introduce various discrete differential operators on these Fourier collocation spectral space.

For a fixed positive integer $N=N_x=N_y$, we define the mesh size $h=\frac{2\pi}{N}=h_x=h_y$, together with the numerical grid points $(x_i, y_j)$,
with $x_i = i h$, $y_j=jh$, $0 \le i , j \le N$.

Let $f$ be a periodic function over the given 2-D numerical grid with its discrete Fourier expansion  given by
\begin{equation}
  f_{i,j} = \sum_{k_1,l_1=-[N/2]}^{[N/2]}
   \hat{f}_{k_1,l_1} \exp \left(  k_1  {\rm i} x_i \right)
   \exp \left(  l_1  {\rm i} y_j \right).
   \label{spectral-coll-1}
\end{equation}
Then its collocation Fourier spectral approximations to first and second order
partial derivatives are given by
\begin{eqnarray}
  \left( {\cal D}_{Nx} f \right)_{i,j} = \sum_{k_1,l_1=-[N/2]}^{[N/2]}
   \left( k_1  {\rm i} \right) \hat{f}_{k_1,l_1}
   \exp \left(  {\rm i} ( k_1 x_i + l_1 y_j ) \right) ,   \label{spectral-coll-2-1}
\\
%  \left( {\cal D}_{Ny} f \right)_{i,j} = \sum_{k_1,l_1=-[N/2]}^{[N/2]}
%   \left(  l_1  {\rm i} \right) \hat{f}_{k_1,l_1}
%   \exp \left( {\rm i} ( k_1 x_i + l_1 y_j ) \right) ,  \label{spectral-coll-2-2}
%\\
  \left( {\cal D}_{Nx}^2 f \right)_{i,j} = \sum_{k_1,l_1=-[N/2]}^{[N/2]}
   \left( - k_1^2 \right) \hat{f}_{k_1,l_1}
   \exp \left(  {\rm i} ( k_1 x_i + l_1 y_j) \right) , \label{spectral-coll-2-3}
%\\
%  \left( {\cal D}_{Ny}^2 f \right)_{i,j} = \sum_{k_1,l_1=-[N/2]}^{[N/2]}
 %  \left( -  l_1^2 \right) \hat{f}_{k_1,l_1}
  % \exp \left(  {\rm i} ( k_1 x_i + l_1 y_j ) \right) . \label{spectral-coll-2-4}
\end{eqnarray}
with ${\cal D}_{Ny}, {\cal D}_{Ny}^2$ defined analogously.
In turn, the discrete Laplacian, gradient and divergence operators are defined as
\begin{eqnarray}
  \Delta_N f =  \left( {\cal D}_{Nx}^2  + {\cal D}_{Ny}^2 \right) f , 
\quad 
  \nabla_N f = \left(  \begin{array}{c}
  {\cal D}_{Nx} f  \\
  {\cal D}_{Ny} f
  \end{array}  \right)  ,  \quad
  \nabla_N \cdot \left(  \begin{array}{c}
  f _1 \\
  f _2
  \end{array}  \right)  = {\cal D}_{Nx} f_1 + {\cal D}_{Ny} f_2 ,  \label{spectral-coll-3}
\end{eqnarray}
at the point-wise level. It is also straightforward to verify that
\begin{equation}
  \nabla_N \cdot \nabla_N f = \Delta_N f .  \label{spectral-coll-4}
\end{equation}

  We also recall, for any given periodic grid functions $f$ and $g$ over the 2-D numerical grid,
the spectral approximations to the $L^2$ inner product and $L^2$ norm are defined as
\begin{eqnarray}
  \left\| f \right\|_2 = \sqrt{ \left\langle f , f \right\rangle } ,  \quad \mbox{with} \quad
  \left\langle f , g \right\rangle  = h^2 \sum_{i,j=0}^{N -1}   f_{i,j} g_{i,j} .
  \label{spectral-coll-inner product-1}
\end{eqnarray}
Meanwhile, such a discrete $L^2$ inner product can also be viewed in the Fourier space other than in physical space, with the help of Parseval's identity:
\be
   \left\langle f , g \right\rangle
   =   \sum_{k_1,l_1=-[N/2]}^{[N/2]}
   \hat{f}_{k_1,l_1}  \overline{\hat{g}_{k_1,l_1}}
   =   \sum_{k_1,l_1=-[N/2]}^{[N/2]}
   \hat{g}_{k_1,l_1}  \overline{\hat{f}_{k_1,l_1}}  ,
   \label{spectral-coll-inner product-2}
\ee
in which $\hat{f}_{k_1,l_1}$, $\hat{g}_{k_1,l_1}$ are the Fourier coefficients of the grid functions $f$ and  $g$ in the expansion as in (\ref{spectral-coll-1}). Furthermore, a detailed calculation shows that the following formulas of integration by parts are also valid at the discrete level:
\begin{eqnarray}
  \left\langle f ,  \nabla_N \cdot \left(  \begin{array}{c}
  g_1 \\
  g_2
  \end{array}  \right)  \right\rangle  = - \left\langle \nabla_N f ,  \left(  \begin{array}{c}
  g_1 \\
  g_2
  \end{array}  \right)  \right\rangle ,   \qquad
  \left\langle f ,  \Delta_N  g  \right\rangle
  = - \left\langle \nabla_N f ,  \nabla_N g   \right\rangle  .
  \label{spectral-coll-inner product-3}
\end{eqnarray}

Discrete Sobolev spaces (with fractional order) on the 2D numerical grid can be defined utilizing the same idea as in the continuous case with the help of the discrete Laplacian operator.

\medskip

%\subsubsection{A preliminary estimate in Fourier collocation spectral space}

It is well-known that the existence of aliasing error in the nonlinear term
poses a serious challenge in the numerical analysis of Fourier collocation
spectral scheme. Next, we recall a periodic extension of a grid function and a Fourier
collocation interpolation operator is introduced (see for instance \cite{GTWWW2011}).
\begin{definition}
  For any periodic grid function $f$ defined over a uniform 2-D numerical grid,
we denote $\f_N$  its periodic extension. More specifically,
assume that the grid function $f$ has a discrete Fourier expansion in the form of
\begin{equation}
  f_{i,j}  = \sum_{k_1,l_1=-[N/2]}^{[N/2]}
   \hat{f}_{k_1,l_1}
     \exp \left( {\rm i} ( k_1 x_i + l_1 y_j) \right) ,
    \label{spectral-coll-projection-1}
\end{equation}
its periodic extension (projection) into $P^N$ is then defined as 
\begin{equation}
   \f_N (\x)  = \sum_{k_1,l_1=-[N/2]}^{[N/2]}
   \hat{f}_{k_1,l_1}
     \exp \left( {\rm i} ( k_1 x + l_1 y ) \right) .
    \label{spectral-coll-projection-2}
\end{equation}
Moreover, for any periodic continuous function $\f$, which may contain larger
wave length, we define its collocation interpolation operator as
\begin{eqnarray}
  & &
  \f_{i,j}  = \sum_{k_1,l_1=-[N/2]}^{[N/2]}
   (\hat{f}_c)_{k_1,l_1}
     \exp \left( {\rm i} ( k_1 x_i + l_1 y_j ) \right) ,   \nonumber
\\
  & &
    P_c^N \f_N (\x)  = \sum_{k_1,l_1=-[N/2]}^{[N/2]}
   (\hat{f}_c)_{k_1,l_1}
     \exp \left( {\rm i} ( k_1 x + l_1 y ) \right) .
    \label{spectral-coll-projection-3}
\end{eqnarray}
Note that $\hat{f}_c$ may not be the Fourier coefficients of $\f$, due to the aliasing
errors.
\end{definition}

%--- 4.2.2
\subsubsection{The collocation Fourier spectral BDF2 AB2 scheme}

We are now ready to present a collocation Fourier spectral approximation in space of the 2nd order BDF2AB2 scheme \eqref{scheme} as follows:
 %--- collocation scheme
  \begin{eqnarray}
&&    \frac{3\omega_N^{n+1}-4\omega_N^n+\om_N^{n-1}}{2k} + \frac12(\nabla_N^\perp(2\psi_N^n-\psi_N^{n-1})\cdot\nabla(2\omega_N^n-\om_N^{n-1}) 
+\nabla_N\cdot(\nabla_N^\perp(2\psi_N^n-\psi_N^{n-1})(2\om_N^n-\om^{n-1})))
\nonumber \\
&=&  \nu\Delta\omega_N^{n+1} + f_N^{n+1},
\label{collo-scheme}
\\
&& -\Delta_n\psi^j = \om_N^j, j=n-1, n.
   \end{eqnarray}

Note that the nonlinear term is an explicit 2nd order in time spectral approximation to alternative formulation of the nonlinear advection term as  $\frac12 \left( \nabla^\perp\psi \nabla \omega + \nabla \cdot \left( \nabla^\perp\psi \omega \right) \right)$ at time step $n+1$. This alternative formulation of the nonlinear term is due to Temam \cite{Temam1966}.

A straightforward application of integration by parts formula
(\ref{spectral-coll-inner product-3}) gives
\begin{eqnarray}
  \left\langle  \omega ,  \u \DOT \nabla_N \omega
  + \nabla_N \cdot \left( \u \omega \right)   \right\rangle
   = \left\langle  \omega ,  \u \DOT \nabla_N \omega  \right\rangle
  - \left\langle \nabla_N \omega , \u \omega   \right\rangle
%  = \left\langle  \omega ,  \u \DOT \nabla_N \omega  \right\rangle
%  - \left\langle \omega , \u \DOT \nabla_N \omega   \right\rangle
  = 0  .  \label{scheme-coll-1st-5}
\end{eqnarray}

This orthogonality property is important in deriving uniform in time and in mesh size estimates on the solutions to the scheme \eqref{collo-scheme}.
Indeed, this can be combined with a inductive and bootstrap argument starting with the assumption that there exists a constant $C_\delta$ such that  
$
  \| \omega_N^j \|_{H_h^\delta} \le C_\delta ,  %\label{est-coll-a priori}
$
for some $\delta > 0$ at time step $j=n-1,n$ to deduce uniform in time bounds. Detail for a first order in time scheme that treat the viscous term implicitly while the advection term explicitly can be found in the recent work by Gottlieb,Tone,  Wang, Wang and Wirosoetisno \cite{GTWWW2011}. We leave out the detail for the verification of the 2nd order in time scheme \eqref{collo-scheme} due to space limitation as a future work.

  %--- sect. 4
  \section{Conclusion and remarks}
  
  We have shown that the second order in time scheme \eqref{scheme} is long time global stable in the sense that the solutions remain bounded in $\dL2$ and are asymptotically bounded in $\dH1$ and $\dot{H}^2$ provided that the time step is sufficiently small. This is a very efficient scheme since only a Poisson type solver is needed at each time step.
  This scheme is advantages over the fully explicit ones in terms of stability, and over the fully implicit one in terms of efficiency and unique solvability (the unique solvability of fully implicit scheme requires small time-step as well). 
  We have also demonstrated that this scheme can be viewed as a dissipative discrete in time dynamical system on a product space as defined in \eqref{scheme2}. Moreover, the marginal distributions of the invariant measures of the discrete dynamical system \eqref{scheme2} converge to invariant measures of the Navier-Stokes system \eqref{NSE} at vanishing time-step.  
  Hence the long time statistics of the numerical scheme \eqref{scheme} converge to those of the original 2D NSE \eqref{NSE}.
   Globally long time stable fully discretized versions via spectral Galerkin or spectral collocation are also discussed.
  Therefore, the scheme \eqref{scheme} that we proposed looks very promising as a tool in the numerical study of long time statistical properties of the 2D Navier-Stokes system in a periodic box. Numerical experiments are under way to test physically interesting cases such as  enstrophy transfer  etc.
  
  However, there are many questions that remain to be answered.
  
  First, it is well-known that the set of invariant measures to the NSE \eqref{NSE} contains more than one point at high enough generalized Grashoff number in general situation thanks to classical bifurcation analysis result (since there are multiple steady states in generic case, see for instance \cite{T1983}). Therefore the convergence that we derived is only an upper semi-continuity result. On the other hand, it is generally believed that the physically relevant and observable equilibrium statistical property of the 2D NSE at large enough Reynolds number is unique and consequently they can be characterized via Kraichnan's law etc (see for instance \cite{F1995, FMRT2001, MY1975, C1994} among others). Hence the question of if the numerical scheme that we proposed above \eqref{scheme} can capture this "physically relevant one" becomes an important issue. One could argue that the physically relevant one should be robust under small perturbation and hence any reasonable numerical scheme (such as the one that we proposed in this paper) should be able to capture the robust behavior asymptotically. Yet another approach is to use small random perturbation to bring out the generic behavior of the underlying system since it is known that many dissipative system with appropriate noise would possess a unique invariant measure (see for instance Da Prato and Zabczyk \cite{DZ1996}, E \cite{E2001}, Hairer and Mattingly \cite{HairerMattingly2008} among many others). In this case we then need to study numerical schemes that are able to capture the long time statistical properties of infinite dimensional (since we work on PDE) random (for the noise) dynamical systems. Besides issues that are parallel to the deterministic case, new questions arises such as the form of the noise as well as the magnitude etc. Of course random dynamical systems emerge in many other contexts including the important application of modeling model errors. We refrain from surveying work in this area.
  
  Second, the global in time stability result that is rigorously proved here imposes a time-step restriction \eqref{5a} although it is of the same order (in terms of the dependence on the viscosity) as that for the first order scheme that we investigated earlier \cite{GTWWW2011}.  We believe that some kind of time-step restriction is needed due to the explicit treatment of the advection term.  It would be interesting to find out the best possible time-step restriction for small $\nu$ (we need to set $f$ to be of the order of $\nu$ as well if we are interested in having a meaningful small $\nu$ asymptotics) . In practice, an on-the-fly criterion may be used to ensure the boundedness of the solution within the discrete $L^2$ norm for instance. 
  
%  Further numerical test underway to judge the advantage and disadvantage of this kind of scheme.
%  Comparison to fully explicit higher order. Time step restriction is controlled by the Reynolds number provided certain small scale beyond $N^2?$ does not appear. Argument via Kraichnan small scale. Does appear and the inverse cascade not completely clear.
  
    Third, we have investigated the periodic boundary and one particular efficient second order in time discretization  only. It would be interesting to study the case with the physically more interesting no-slip boundary condition, as well as other efficient implicit-explicit (IMEX) schemes such as treating the advection term explicitly using classical second order linear multistep approach, Crank-Nicolson leap frog type approach, other spatial discretization, as well as higher order in time schemes. 
    Of course it is also of interest to investigate applicability of such schemes to other systems. 
   We note that globally long time stable schemes do exist according to Gottlieb and Wang \cite{GottliebWang2011}. However, it is not clear if those schemes are able to capture long time statistical properties since the modified energy used to show their stability depends on time-step, and the norm is not equivalent to the standard one.  
   Higher order BDF may not be desirable due to the lack of A-stability \cite{HW2002}.

%    Other IMEX scheme that requires solving a linear (asymmetric problem) each time step. No time step restriction.

  %--- Append
  \section*{Appendix A:  Wente type estimates}
    We recall here a few Wente type estimates from \cite{GTWWW2011} that are applicable to our doubly periodic setting.
  Original estimate of the Jacobian term (essentially $H^{-1}$ norm) goes back to Wente \cite{Wente1969}.
   $L^2$ norm of the Jacobian in the case with homogeneous Dirichlet boundary condition can be found in Kozono and Taniuchi \cite{KT2000}, as well as Kim \cite{Kim2009}. 

%--- Wente
\begin{lm}\label{wente}
%  Let $\dot{H}^m_{per}(\Omega)$ be the periodic Sobolev space of order $m$ with zero average.
  There exists an absolute constant $C_w\ge 1$ such that
  \begin{eqnarray}
   \|\nabla^\perp\psi \cdot \nabla \phi \|_{H^{-1}} &\le&
         C_w \|\psi\|_{H^1} \|\phi\|_{H^1},
     \quad \forall \psi\in \dot{H}^1_{per}, \phi \in \dot{H}^1_{per}(\Omega),
  \\
  \|\nabla^\perp\psi \cdot \nabla \phi \|_{H^{-1}} &\le&
         C_w \|\psi\|_{H^2} \|\phi\|_{L^2},
     \quad \forall \psi\in \dot{H}^2_{per}, \phi \in \dot{L}^2(\Omega),
  \\    
  \|\nabla^\perp\psi \cdot \nabla \phi \|_{L^2} &\le&
         C_w \|\psi\|_{H^2} \|\phi\|_{H^1},
     \quad \forall \psi\in \dot{H}^2_{per}, \phi \in \dot{H}^1_{per}(\Omega) \label{158},
  \\
     \|\nabla^\perp\psi \cdot \nabla\phi \|_{L^2} &\le&
      C_w \|\psi\|_{H^1}\|\phi\|_{H^2},
      \quad \forall \psi\in \dot{H}^1_{per}, \phi \in \dot{H}^2_{per}(\Omega),
      \\
     \|\nabla^\perp\psi \cdot \nabla\phi \|_{H^1} &\le&
      C_w \|\psi\|_{H^2}\|\phi\|_{H^2},
      \quad \forall \psi, \phi \in \dot{H}^2_{per}(\Omega) .
    \end{eqnarray}
\end{lm}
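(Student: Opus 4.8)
The plan is to regard the five inequalities as a single package resting on two genuinely borderline facts — the classical Wente $H^{-1}$ bound (the first inequality) and the $L^2$ bound on the Jacobian $\nabla^\perp\psi\cdot\nabla\phi$ (the third inequality, \eqref{158}) — and to derive the remaining three from these by elementary manipulations. The structural observations I would use throughout are that $\nabla^\perp\psi\cdot\nabla\phi$ is a Jacobian determinant, hence a pure divergence, $\nabla^\perp\psi\cdot\nabla\phi=\nabla\cdot(\phi\,\nabla^\perp\psi)=-\nabla\cdot(\psi\,\nabla^\perp\phi)$ (using $\nabla\cdot\nabla^\perp\psi=0$), and that it is antisymmetric, $\nabla^\perp\psi\cdot\nabla\phi=-\nabla^\perp\phi\cdot\nabla\psi$. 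The antisymmetry gives the fourth inequality from the third at once.

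For the first and third inequalities I would invoke the compensated-compactness structure rather than a crude H\"older estimate, which in two dimensions loses by a borderline amount (e.g.\ $\|\phi\,\nabla^\perp\psi\|_{L^2}\le\|\phi\|_{L^4}\|\nabla\psi\|_{L^4}$ only yields $\|\psi\|_{H^1}^{1/2}\|\psi\|_{H^2}^{1/2}\|\phi\|_{H^1}$). For the $H^{-1}$ estimate one tests against $\zeta\in\dot H^1_{per}$ with $\|\zeta\|_{H^1}\le1$; the pairing equals $-\int_\Omega\phi\,(\nabla^\perp\psi\cdot\nabla\zeta)\,d\bx$ by the divergence form, the Jacobian $\nabla^\perp\psi\cdot\nabla\zeta$ lies in the (local) Hardy space with norm $\le C\|\psi\|_{H^1}\|\zeta\|_{H^1}$ by the Coifman--Lions--Meyer--Semmes theorem, and $\phi\in\dot H^1_{per}$ embeds into $BMO$ with $\|\phi\|_{BMO}\le C\|\phi\|_{H^1}$ (the borderline Sobolev embedding on $\mathbb{T}^2$), so the Hardy--$BMO$ duality closes the estimate; alternatively Wente's original elliptic argument for $\Delta w=\nabla^\perp\psi\cdot\nabla\zeta$ applies. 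The $L^2$ Jacobian bound \eqref{158} is argued by the same compensation (or by solving $\Delta w=\nabla^\perp\psi\cdot\nabla\phi$ with zero mean and using elliptic regularity together with the determinant cancellation). These are precisely the estimates of Wente \cite{Wente1969}, Kozono and Taniuchi \cite{KT2000} and Kim \cite{Kim2009} (stated there with a homogeneous Dirichlet boundary condition) and recorded in the doubly periodic form in \cite{GTWWW2011}; I would quote them and merely check that passing to the torus changes nothing in the proofs.

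It then remains to assemble the rest. The second inequality follows from \eqref{158} by duality: for $\zeta\in\dot H^1_{per}$, the divergence form and an integration by parts (no boundary terms on $\mathbb{T}^2$) give $\langle\nabla^\perp\psi\cdot\nabla\phi,\zeta\rangle=-\int_\Omega\phi\,(\nabla^\perp\psi\cdot\nabla\zeta)\,d\bx$, which is bounded by $\|\phi\|_{L^2}\,\|\nabla^\perp\psi\cdot\nabla\zeta\|_{L^2}\le C_w\|\phi\|_{L^2}\|\psi\|_{H^2}\|\zeta\|_{H^1}$ by \eqref{158} applied with $\zeta$ in the slot of $\phi$; taking the supremum over $\|\zeta\|_{H^1}\le1$ yields the claim. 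The fifth inequality follows from the third and fourth by the Leibniz rule: $\partial_j(\nabla^\perp\psi\cdot\nabla\phi)=\nabla^\perp(\partial_j\psi)\cdot\nabla\phi+\nabla^\perp\psi\cdot\nabla(\partial_j\phi)$, and each summand is the Jacobian of an $H^1$ function with an $H^2$ function, whose $L^2$ norm is $\le C_w\|\psi\|_{H^2}\|\phi\|_{H^2}$ by \eqref{158} and the fourth inequality; adding these to the $L^2$ bound from \eqref{158} controls the full $H^1$ norm, and enlarging the constant makes it $\ge1$. The only substantive obstacle is the pair of borderline estimates (the first and third), whose proofs need the Hardy-space/$BMO$ (or equivalent elliptic) machinery; since these are imported from the cited references, the work left is purely bookkeeping — transferring them to the periodic torus and deriving the other three as above.
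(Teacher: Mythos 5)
Your proposal is correct. The paper offers no proof of Lemma \ref{wente} at all --- the appendix merely cites Wente \cite{Wente1969}, Kozono--Taniuchi \cite{KT2000}, Kim \cite{Kim2009} and the periodic versions in \cite{GTWWW2011} --- and your argument rests on those same sources only for the two genuinely borderline estimates (the first inequality and \eqref{158}), while your derivations of the remaining three via the divergence form $\nabla\cdot(\phi\,\nabla^\perp\psi)$, the antisymmetry of the Jacobian, $H^{-1}$--$\dot H^1_{per}$ duality and the Leibniz rule are sound, so your write-up is if anything more self-contained than the paper's.
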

%

%---
\section*{Appendix B: A discrete Gronwall type inequality for two step iterations}
\begin{lm} \label{gronwall}
Let $\{g^n\}$ be a non-negative sequence. Suppose there exist constants  $\varepsilon>0, \beta >0, \lambda \in (0,1)$ such that
\be
  g^{n+1} \le \frac{\lambda}{1+\varepsilon}g^n + \frac{1-\lambda}{1+\varepsilon} g^{n-1} + \frac{\beta\varepsilon}{1+\varepsilon}, \forall n\ge 1. 
 \ee
 Then we have,   for $\gamma = \frac{1+\varepsilon/2}{1+\varepsilon} <1$, and $n\ge 2$,
 \begin{eqnarray}
   g^{n+1} & \le & \gamma \max\{g^n, g^{n-1}, 2\beta\},
   \label{Gstep1}
   \\
   g^{n+1} & \le & \gamma \max\{\gamma^{\lfloor \frac{n-1}{2}\rfloor}g^2, \gamma^{\lfloor \frac{n-1}{2}\rfloor}g^{1}, 2\beta\}. 
   \label{Gfinal}
   \end{eqnarray}
   where ${\lfloor\cdot \rfloor}$ denotes the floor function  (the biggest integer bounded by $\cdot$). 
 \end{lm}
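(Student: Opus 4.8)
The plan is to read both inequalities as consequences of a single one-step contraction estimate, and then to iterate.

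First I would prove \eqref{Gstep1} by a direct substitution. Writing $c_1=\frac{\lambda}{1+\varepsilon}$, $c_0=\frac{1-\lambda}{1+\varepsilon}$, $d=\frac{\beta\varepsilon}{1+\varepsilon}$, and setting $M:=\max\{g^n,g^{n-1},2\beta\}$, I would bound $g^n\le M$, $g^{n-1}\le M$, and $\beta\le M/2$ in the hypothesis. Collecting the coefficients gives $g^{n+1}\le (c_1+c_0)M+\frac{\varepsilon}{1+\varepsilon}\cdot\frac{M}{2}=\frac{1+\varepsilon/2}{1+\varepsilon}M=\gamma M$, which is exactly \eqref{Gstep1}. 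The only arithmetic fact used is the identity $c_1+c_0+\frac12\cdot\frac{\varepsilon}{1+\varepsilon}=\frac{1}{1+\varepsilon}+\frac{\varepsilon/2}{1+\varepsilon}=\gamma$, so that the contraction rate is precisely $\gamma$.

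For \eqref{Gfinal} I would iterate. Keeping $M_n:=\max\{g^n,g^{n-1},2\beta\}$, note that \eqref{Gstep1} gives $g^{n+1}\le\gamma M_n\le M_n$, hence $M_{n+1}=\max\{g^{n+1},g^n,2\beta\}\le M_n$: the running maximum is non-increasing. Combining this monotonicity with \eqref{Gstep1} applied at levels $n$ and $n+1$ yields the two-step contraction $M_{n+2}\le\max\{\gamma M_n,2\beta\}$, since $g^{n+2}\le\gamma M_{n+1}\le\gamma M_n$ and $g^{n+1}\le\gamma M_n$. Iterating this contraction down to the base level $M_2=\max\{g^2,g^1,2\beta\}$, and using $\gamma^{j}\,2\beta\le 2\beta$ to absorb the floor at every stage, I would obtain $M_n\le\max\{\gamma^{\lfloor (n-2)/2\rfloor}\max\{g^1,g^2\},2\beta\}$; inserting this into $g^{n+1}\le\gamma M_n$ then produces a bound of exactly the shape of \eqref{Gfinal}, expressed through $g^1,g^2$ and the absorbing level $2\beta$.

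The main obstacle is the exponent bookkeeping, namely matching the number of two-step contractions to the stated power $\gamma^{\lfloor (n-1)/2\rfloor}$ uniformly in the parity of $n$, together with the treatment of the floor. For even $n$ the count of contractions back to $M_2$ lands directly on the claimed exponent, and the level $2\beta$ is reproduced because $\gamma<1$. The odd $n$ case is the delicate one: routing through $M_3\le M_2$ costs one contraction, so the plain running-maximum argument is a factor $\gamma$ short on the even-indexed iterates, and one must sharpen it — either by tracking the consecutive pair $(g^{2j+1},g^{2j+2})$ through the recurrence, or by exploiting that the hypothesis is a genuine convex combination of $g^n$ and $g^{n-1}$ rather than a mere bound by their maximum, in an attempt to recover the missing power. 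In carrying this out I would also verify that $2\beta$ is truly absorbing, which is transparent from the constant super-solution $g^\ast\equiv\beta$ satisfying the recurrence up to the $\varepsilon$-factor, so that the forcing $\frac{\beta\varepsilon}{1+\varepsilon}$ never lifts the sequence above the $2\beta$ level once it has entered it. I expect this parity-dependent reconciliation of the per-step rate $\gamma$ with the claimed geometric exponent — and not any single estimate — to be where the genuine care is required.
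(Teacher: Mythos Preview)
Your proof of \eqref{Gstep1} is correct and is exactly what the paper treats as immediate.

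For \eqref{Gfinal} the paper's route is different and more direct: it inducts on the statement \eqref{Gfinal} itself, splitting on the parity of $n$. Given the bound up to index $n$, one writes $g^{n+2}\le\gamma\max\{g^{n+1},g^n,2\beta\}$ via \eqref{Gstep1} and then substitutes the induction hypothesis for $g^{n+1}$ and $g^n$ \emph{separately}, with their distinct exponents $\lfloor(n-1)/2\rfloor$ and $\lfloor(n-2)/2\rfloor$; the identity $1+\lfloor(n-2)/2\rfloor=\lfloor n/2\rfloor$, valid for either parity, then closes the step. Your $M_n$ formulation discards precisely this asymmetry by bounding $g^n$ and $g^{n-1}$ from a common quantity, which is why it drops a factor of $\gamma$ in the odd case. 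The clean repair is simply to induct on the target statement rather than on a coarsened proxy; your pair-tracking fix does not recover the missing power either, since the raw datum $g^2$ still sits undiminished at the base of the odd chain.

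Your convexity instinct, however, is on point---and is in fact needed even in the direct-induction approach. That induction requires two base cases, $n=2$ and $n=3$: the former is just \eqref{Gstep1}, but the latter, namely $g^4\le\gamma\max\{\gamma g^2,\gamma g^1,2\beta\}$, cannot come from \eqref{Gstep1} alone. Indeed, with $g^1=0$, $g^2=1$, $\beta=0$, \eqref{Gstep1} permits $g^4$ as large as $\gamma$ while the claim demands $\gamma^2$. One must instead feed the actual recurrence for $g^3$ into that for $g^4$ and use $\lambda\in(0,1)$; this is a short direct check (the coefficient of $g^2$ becomes $\frac{\lambda^2+(1-\lambda)(1+\varepsilon)}{(1+\varepsilon)^2}\le\gamma^2$ since $\lambda(\lambda-1-\varepsilon)<0$) that neither your sketch nor the paper's parity argument makes explicit.
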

 
 \begin{proof}
   The proof is a straightforward induction based on if $n$ is even or odd and we leave it to the interested reader.
  
 \end{proof}

%-- ack
\section*{Acknowledgement}
This work is supported in part by grants from the National Science
Foundation.
%, a Modern Applied Mathematics 111 project at Fudan University from the Chinese MOE, and ...

%\bibliographystyle{plain}
%\input{NSE2D2O.bib}

\end{document}